\newtheorem{theorem}{Theorem}[section]
\newtheorem{lemma}[theorem]{Lemma}
\newtheorem{charact}[theorem]{Characterization}
\begin{document}
\title{New consistent exponentiality tests based on V-empirical Laplace transforms with comparison of efficiencies}

\author{Marija Cupari\'c\thanks{marijar@matf.bg.ac.rs},  Bojana Milo\v sevi\'c\footnote{bojana@matf.bg.ac.rs (corresponding author)} , Marko Obradovi\' c \footnote{marcone@matf.bg.ac.rs}
         \\\medskip
{\small Faculty of Mathematics, University of Belgrade, Studenski trg 16, Belgrade, Serbia}}

\date{}

\maketitle
\begin{abstract}
We present new  consistent goodness-of-fit tests for exponential
distribution,  based on the Desu characterization. The test statistics
represent the weighted $L^2$ and $L^{\infty}$ distances between
appropriate V-empirical Laplace transforms of random variables that appear
in the characterization. In addition, we perform an extensive comparison
of Bahadur efficiencies of different recent and classical exponentiality
tests.
We also present the empirical powers of new tests.
\end{abstract}
{\small \textbf{ keywords:}  goodness-of-fit; exponential distribution; Laplace transform; Bahadur efficiency; V-statistics 

\textbf{MSC(2010):} 62G10, 62G20}

\section{Introduction}
To justify the use of more complicated models for lifetime data, one of the first steps is to reject the most simple one, the exponential.
For this purpose numerous tests have been developed and are available in the literature.

The classical, and most commonly used procedure, is to apply one of  universal time-honored goodness-of-fit tests based on empirical distribution function, such as Kolmogorov-Smirnov, Cramer-von Mises, Anderson-Darling. To make them applicable to the case of a composite null hypothesis, the  Lilliefors modification with estimated rate parameter is frequently used.

Another approach is to use tests tailor-made for testing exponentiality. Such tests usually employ some special properties of the exponential distribution. Various integral transform related properties have been exploited: 
 characteristic functions (see e.g. \cite{henze1992new}, \cite{henze2002goodness}, \cite{henze2005}); Laplace transforms (see e.g. \cite{henze2002tests}, \cite{klar2003test}, \cite{meintanis2007testing}); and other integral transforms (see e.g. \cite {klar2005tests}, \cite{meintanis2008tests}). Other possible properties include maximal correlations (see \cite{grane2009location}, \cite{grane2011directional}, \cite{strzalkowska2017goodness}), entropy  (see \cite{alizadeh2011testing}), etc.

An important type of such properties are the characterizations of the exponential distribution. Many of them, being relatively simple, are very suitable for construction of goodness-of-fit tests. This is especially true for the equidistribution-type characterizations. Since the equality in distribution can be expressed in many ways (equality of  distribution functions, densities, integral transforms, etc.),  many different types of test statistics are available.
Tests that use U-empirical and V-empirical distribution functions, of  integral-type (integrated difference) and supremum-type, can be found in \cite{NikVol}, \cite{volkova2015goodness}, \cite{jovanovic}, \cite{Publ}, \cite{bojanaMetrika}, \cite{nikitin2016efficiency}. Weighted integral-type and $L^2$-type tests that use U- or V- empirical Laplace transforms are presented in \cite{MilosevicObradovicPapers} and in \cite{cuparic2018new}.

The common approach to explore the quality of tests is to find their power against different alternatives. Several  papers are devoted to comparative power studies of exponentiality tests (see e.g. \cite{henze2005}, \cite{torabi2018wide}, \cite{allison2017apples}).

Another popular choice for the quality assessment is the asymptotic efficiency. In this regard, however, no extensive study has been done.
In this paper our aim is to compare the exponentiality tests using the approximate Bahadur efficiency (see \cite{bahadur1960asymptotic}).

We opt for the approximate Bahadur efficiency since it is applicable to asymptotically non-normally distributed test statistics, and moreover it can distinguish tests better than some other types of efficiencies like Pitman or Hodges-Lehmann (see \cite{nikitinKnjiga}).

Consider the setting of testing the null hypothesis $H_0:\theta\in\Theta_0$ against the alternative
$H_1:\theta\in\Theta_1$. Let us suppose that for a test statistic $T_n$, under $H_0$, the limit 
$\lim_{n\rightarrow\infty}P\{T_n\leq t\}=F(t)$, where $F$ is non-degenerate distribution function, exists. 
Further, suppose that $\lim_{t\rightarrow\infty}t^{-2}\log(1-F(t))=-\frac{a_T}{2}$, and
that the limit in probability $P_\theta$ $\lim_{n\rightarrow\infty}T_n=b_T(\theta)>0$, exists for $\theta \in \Theta_1$. 
The relative approximate Bahadur efficiency with respect to another test statistic $V_n$ is
\begin{equation*}
e^{\ast}_{T,V}{(\theta)}=\frac{c^{\ast}_T{(\theta)}}{c^{\ast}_V{(\theta)}},
\end{equation*}
where \begin{equation}\label{appslope}c^{\ast}_T(\theta)=a_Tb_T^2(\theta)\end{equation} 
is the approximate Bahadur slope of $T_n$. Its limit when $\theta\to 0$ is called the local approximate Bahadur efficiency.

The tests we consider may be classified into three groups according to their limiting distributions:
asymptotically normal ones; those whose asymptotic distribution coincides with the supremum of some Gaussian process; and those whose limiting distribution is an infinite linear combination of independent and identically distributed (i.i.d.) chi-squared random variables.

For the first group of tests, the coefficient $a_T$ is the inverse of the limiting variance. For the second, it is the inverse of the supremum of the covariance
function of the limiting process (see \cite{marcus1972sample}). For the third group, $a_T$ is  the inverse of the largest coefficient in the corresponding linear combination (see \cite{Zolotarev}), which is also equal to the largest eigenvalue of some integral operator. 

The goal of this paper is twofold. First, we propose two new classes of characterization based exponentiality tests. One of them is of weighted $L^2$-type, and the other, for the first time, is based on $L^{\infty}$ distance between two $V$-empirical Laplace transforms of the random variables that appear in the characterization. 

Secondly, we perform an extensive efficiency comparison. Unlike for the remaining two, for the third group of tests, the efficiencies have not been calculated so far. This is due to the fact that the largest eigenvalue in question usually cannot be obtained analytically. We overcome this problem using a recently proposed approximation procedure from \cite{bozin}.

The rest of the paper is organized as follows. In Section 2 we propose new tests and explore their asymptotic properties. In Section 3 we give a partial review of test statistics for testing exponentiality, together with their Bahadur slopes. Section 4 is devoted to the comparison of efficiencies. In Section 5 we present the powers of new tests. All proofs are given in two Appendix sections.

\section{New test statistics}\label{sec: testNewStat}
 In this section we present two new exponentiality tests based on the following characterization from \cite{desu1971}.
 
\begin{charact}[Desu (1970)]\label{desuCh}
Let $X_1$ and $X_2$ be two independent copies of a random variable $X$ with pdf $f(x)$. Then $X$ and $2\min(X_1, X_2)$ have the same distribution if and only if for some $\lambda>0$ $f(x)=\lambda e^{-\lambda x},$ for $x\geq0$.
\end{charact}

Let $X_1,X_2,...,X_n$ be a random sample from a non-negative continuous distribution. To test the null hypothesis that the sample comes from the exponential distribution $\mathcal{E}(\lambda)$, with an unknown $\lambda>0$, we examine the difference $\mathcal{L}^{(1)}_n(t)-\mathcal{L}^{(2)}_n(t)$, of V-empirical Laplace transforms of $X$ and $2\min(X_1,X_2)$. 

Clearly, if null hypothesis is true, the difference   $\mathcal{L}^{(1)}_n(t)-\mathcal{L}^{(2)}_n(t)$ will be small for each $t$. Taking  this into account we propose the following two classes of test statistics, with their large values considered significant:
\begin{align*}
M^{\mathcal{D}}_{n,a}&=\int_0^{\infty}\Big(\frac{1}{n}\sum_{i_1=1}^ne^{-tY_{i_1}}-\frac{1}{n^{2}}\sum_{i_1,i_2=1}^ne^{-t 2\min(Y_{i_1},Y_{i_2})}\Big)^2e^{-at}dt;\\
L^{\mathcal{D}}_{n,a}&=\sup_{t>0}\Big|\Big(\frac{1}{n}\sum_{i_1=1}^ne^{-tY_{i_1}}-\frac{1}{n^{2}}\sum_{i_1,i_2=1}^ne^{-t 2\min(Y_{i_1},Y_{i_2})}\Big)e^{-at}\Big|,
\end{align*}
where
$Y_i=\frac{X_i}{\bar{X}}$, $i=1,2,\ldots,n$, is the scaled sample.

The sample is scaled to make the test statistic ancillary for the parameter $\lambda$ and the purpose of the tuning parameter $a$ is to magnify different types of deviations from the null distribution.

\subsection{Asymptotic properties under $H_0$}

Notice that $M^{\mathcal{D}}_{n,a}$ is a V-statistic with estimated  parameter $\hat{\lambda}$, i.e. it can be represented in the form
$$M^{\mathcal{D}}_{n,a}=M^{\mathcal{D}}_{n,a}(\widehat{\lambda}_n)=\frac{1}{n^4}\sum_{i_1,i_2,i_3,i_4}H(X_{i_1},X_{i_2},{X_{i_3},X_{i_4}};a,\hat{\lambda}_n),$$
where $H$ is a symmetric function of its arguments, and $\hat{\lambda}_n$ is the reciprocal sample mean.

Similarly, for a fixed $t$, the expression in the absolute parenthesis of the statistics $L^{\mathcal{D}}_{n,a}$ is a V-statistics that can be represented as
\begin{align}\label{VstatL}
    \frac{1}{n^2}\sum_{i_1,i_2}\Phi(X_{i_1},X_{i_2};t,a,\hat{\lambda}_n),
\end{align}
where $\Phi$ is a symmetric function of its arguments. 

The asymptotic behaviour of $M^{\mathcal{D}}_{n,a}$ is given in the following theorem.

\begin{theorem}\label{raspodelaM}
	Let $X_1,..., X_n$ be i.i.d. with exponential distribution. Then
	\begin{equation*}
	nM^{\mathcal{D}}_{n,a}\stackrel{D}{\rightarrow}6\sum_{k=1}^\infty\delta_kW^2_k,
	\end{equation*}
	where $\delta_k, k=1,2,...,$ is the sequence of  eigenvalues of the integral operator $A$ defined by $Aq(x)=\int_{0}^{\infty}\tilde{h}_2(x,y;a)q(y)dF(y)$, with $\tilde{h}_2(x,y)=E(H(\cdot)|X_1=x,X_2=y)$ being the second projection of kernel $H(X_1,X_2,X_3,X_4;a,\lambda)$, and $W_{k},\;k=1,2,...,$ are independent standard normal variables.
\end{theorem}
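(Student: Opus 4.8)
The plan is to recognize $nM^{\mathcal{D}}_{n,a}$ as a degenerate V-statistic with an estimated parameter and apply the standard limit theory for such objects. First I would fix notation: write $H(x_1,x_2,x_3,x_4;a,\lambda)$ for the symmetric kernel and $\widehat{\lambda}_n$ for the reciprocal sample mean, which under $H_0$ is a $\sqrt{n}$-consistent estimator of the true $\lambda$ (we may take $\lambda=1$ by the scaling built into $Y_i=X_i/\bar X$, so that in fact the statistic does not depend on $\lambda$). The first substantive step is to show that replacing $\widehat{\lambda}_n$ by the true value $\lambda$ does not change the asymptotic distribution: because the statistic is already a function of the scale-invariant $Y_i$, the dependence on $\widehat\lambda_n$ is only through the normalization, and a Taylor expansion of $H$ around $\lambda$ together with the fact that the statistic is centred (its first projection vanishes, see below) shows the estimation effect is $o_P(1/n)$. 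This reduces the problem to the asymptotics of an ordinary V-statistic $\frac{1}{n^4}\sum H(X_{i_1},\dots,X_{i_4};a,\lambda)$.

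Next I would verify degeneracy. Under $H_0$ the characterization guarantees $\mathcal{L}^{(1)}(t)=\mathcal{L}^{(2)}(t)$ for all $t$, so $E\big(\frac1n\sum e^{-tY_{i_1}}-\frac1{n^2}\sum e^{-2t\min(Y_{i_1},Y_{i_2})}\big)\to 0$; at the population level this forces the first projection $\tilde h_1(x;a)=E\big(H(x,X_2,X_3,X_4;a,\lambda)\big)$ to be identically zero, so the V-statistic is degenerate of order $1$ and the leading term is of order $n^{-1}$. I would then invoke the standard decomposition (Hoeffding / von Mises) for degenerate V-statistics: $nM^{\mathcal{D}}_{n,a}$ has the same limit as $n$ times the second-order term, namely $\binom{4}{2}=6$ times the quadratic form $\frac1{n}\sum_{i,j}\tilde h_2(X_i,X_j;a)$ minus diagonal corrections, and by the spectral theorem this converges in distribution to $6\sum_{k=1}^\infty \delta_k(W_k^2-?)$ where $\delta_k$ are the eigenvalues of the operator $Aq(x)=\int_0^\infty \tilde h_2(x,y;a)q(y)\,dF(y)$. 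For the V-statistic (as opposed to the U-statistic) the diagonal terms contribute an extra deterministic shift; here one checks that $\sum_k\delta_k=E\tilde h_2(X,X;a)$ is finite so the series converges and — because we are dealing with the V-version — the limit is exactly $6\sum_k\delta_k W_k^2$ with the $W_k$ i.i.d.\ standard normal, as claimed.

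The key analytic inputs that must be checked along the way are: (i) square-integrability of $\tilde h_2$, i.e.\ $\int\!\int \tilde h_2(x,y;a)^2\,dF(x)\,dF(y)<\infty$, so that $A$ is Hilbert--Schmidt with a summable, real, nonnegative eigenvalue sequence; (ii) that $\tilde h_2$ is continuous and the trace condition $\sum\delta_k<\infty$ holds, guaranteeing almost-sure convergence of the random series and the correct treatment of the diagonal; and (iii) computing $\tilde h_2(x,y;a)$ explicitly by taking the conditional expectation of $H$ given two arguments — this is the place where the integral $\int_0^\infty e^{-at}(\cdot)\,dt$ is performed, producing an expression in terms of $x$, $y$ and $a$ built from terms like $(a+x+y+\cdots)^{-1}$. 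Steps (i)--(ii) are routine once $\tilde h_2$ is in hand, because the exponential tails make every moment finite.

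The main obstacle I expect is the bookkeeping in the second projection and the estimated-parameter reduction run simultaneously: one must be careful that the kernel $H$ arising from squaring the difference of a one-sample and a two-sample V-statistic is correctly symmetrized over four indices, that its first projection genuinely vanishes under $H_0$ (this uses the Desu characterization in the ``only if'' direction, not just equality of Laplace transforms at isolated $t$), and that the $O_P(n^{-1/2})$ fluctuation of $\widehat\lambda_n$ interacts with the already-degenerate kernel to leave the $n^{-1}$ rate intact. Everything else — the passage from the V-statistic to the weighted sum of $\chi^2_1$'s via the spectral decomposition of a Hilbert--Schmidt operator — is the textbook argument (cf.\ the references to \cite{Zolotarev}), and the factor $6$ is just $\binom42$.
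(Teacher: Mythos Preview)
Your proposal is essentially the paper's own argument: reduce from the estimated parameter $\widehat\lambda_n$ to the true $\lambda$, verify that the resulting order-$4$ V-statistic has a vanishing first projection (degeneracy of degree~1), and then invoke the standard limit theorem for degenerate V-statistics to obtain $6\sum_k\delta_kW_k^2$ via the spectral decomposition of the integral operator with kernel $\tilde h_2$.

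The one place where your write-up is looser than the paper is the estimated-parameter step. You attribute the negligibility of the estimation effect to the vanishing of the first projection of $H$; that is not the operative reason. The paper instead works at the level of the \emph{inner} process $V_n(t;\gamma)=\frac{1}{n^2}\sum\xi(X_{i_1},X_{i_2},t;a\gamma)$, applies the mean value theorem in $\gamma$, and uses the law of large numbers for V-statistics to show that $\partial V_n(t;\gamma)/\partial\gamma$ converges to $E\big(2t\min\{X_1,X_2\}e^{-2t\gamma\min\{X_1,X_2\}}-tX_1e^{-t\gamma X_1}\big)=0$. This zero comes from the Desu characterization itself (equality of Laplace transforms, hence of their $\gamma$-derivatives), not from the degeneracy of $H$. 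Combined with $\sqrt n(\widehat\lambda_n-\lambda)=O_P(1)$, this gives $\sqrt nV_n(t;\widehat\lambda_n)$ and $\sqrt nV_n(t;\lambda)$ the same limit, and hence $nM_{n,a}(\widehat\lambda_n)\stackrel d\sim nM_{n,a}(\lambda)$. Your Taylor-expansion sketch can be made to say exactly this, but as written it cites the wrong ingredient. The paper also carries out the explicit computation of $\tilde h_2(x,y;a)$ in closed form (a lengthy expression involving the exponential integral $\mathrm{Ei}$), which you anticipate but do not perform; this is where the boundedness used to invoke \cite{korolyuk} is verified.
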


The asymptotic behaviour of $L^{\mathcal{D}}_{n,a}$ is given in the following theorem.

\begin{theorem}\label{raspodelaL}\small
	Let $X_1,..., X_n$ be i.i.d. with exponential distribution. Then
	\begin{equation*}
	\sqrt{n} L^{\mathcal{D}}_{n,a}\stackrel{D}{\rightarrow}\sup_{t>0}|\eta(t)|,
	\end{equation*}
	where $\eta(t)$  is a centered Gaussian process with the  covariance function 
	\begin{equation*}\label{kov}
	K(s,t)=\frac{e^{-a(s+t)}st(4+8s+4s^2+8t+15st+6s^2t+4t^2+6st^2)}{4(1+s)(1+t)(1+s+t)(2+2s+t)(2+s+2t)(3+2s+2t)}
	\end{equation*}
\end{theorem}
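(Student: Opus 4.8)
The plan is to realize $\sqrt{n}\,L^{\mathcal D}_{n,a}$ as a continuous functional of an empirical process indexed by $t>0$ and then invoke a functional central limit theorem together with the continuous mapping theorem. Concretely, for fixed $t$ write $Z_n(t)=\sqrt n\big(\mathcal L_n^{(1)}(t)-\mathcal L_n^{(2)}(t)\big)$, so that $\sqrt n\,L^{\mathcal D}_{n,a}=\sup_{t>0}|Z_n(t)\,e^{-at}|$. As recorded in \eqref{VstatL}, the bracketed quantity is, for each $t$, a V-statistic with estimated parameter $\widehat\lambda_n$; the first step is the usual reduction of a V-statistic to its Hájek projection. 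I would compute the first projection $\varphi_1(x;t)=E\big[\Phi(x,X_2;t,a,\lambda)\big]$ under $H_0$ (taking $\lambda=1$ after scaling), verify that $E\,\Phi=0$ on the null, and show that the degenerate remainder is $o_P(1)$ uniformly in $t$ after multiplication by the integrable weight $e^{-at}$. Because the statistic uses the scaled sample $Y_i=X_i/\bar X$, I also need to account for the estimation of $\lambda$: expanding in $\widehat\lambda_n$ around the true value contributes an additional linear term, so the effective influence function is $g(x;t)=\varphi_1(x;t)+\big(\partial_\lambda E\Phi\big)\cdot(x-1)$ (the $(x-1)$ coming from the first-order expansion of $1/\bar X$). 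This yields $Z_n(t)e^{-at}=\tfrac{1}{\sqrt n}\sum_{i=1}^n g(Y_i;t)e^{-at}+o_P(1)$ uniformly in $t$.

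The second step is to establish weak convergence of the process $\zeta_n(t):=\tfrac{1}{\sqrt n}\sum_{i=1}^n g(X_i;t)e^{-at}$ in $C[0,\infty)$ (or in $\ell^\infty$ of a suitable index set, with the weight forcing tightness near $t=\infty$). Finite-dimensional convergence to a centered Gaussian vector is immediate from the multivariate CLT. For tightness I would use the smoothness of $t\mapsto g(x;t)e^{-at}$: the map is differentiable in $t$ with derivative dominated by an integrable envelope, so standard moment/bracketing criteria (e.g. the chaining bound for processes with Lipschitz-in-index increments, or Theorem in van der Vaart–Wellner on Donsker classes with a square-integrable envelope) give tightness, and the exponential weight handles the tail $t\to\infty$ where the increments decay geometrically. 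The limit process $\eta$ is centered Gaussian with covariance $K(s,t)=E\big[g(X;s)g(X;t)\big]e^{-a(s+t)}$.

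The third and essentially routine step is to evaluate this expectation explicitly. With $X\sim\mathcal E(1)$ one has closed forms for $E\,e^{-tX}$, $E\,e^{-t\min(X,X')}$ and the mixed moments appearing in $g$, all of which are rational in $s,t$ after integrating exponentials against $e^{-x}dx$; collecting terms and putting over a common denominator should produce exactly the stated
\[
K(s,t)=\frac{e^{-a(s+t)}st(4+8s+4s^2+8t+15st+6s^2t+4t^2+6st^2)}{4(1+s)(1+t)(1+s+t)(2+2s+t)(2+s+2t)(3+2s+2t)}.
\]
Finally, since $w\mapsto\sup_{t>0}|w(t)|$ is continuous on the space in which $\zeta_n\Rightarrow\eta$, the continuous mapping theorem gives $\sqrt n\,L^{\mathcal D}_{n,a}\xrightarrow{D}\sup_{t>0}|\eta(t)|$.

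I expect the main obstacle to be the uniform (in $t$) control of the two approximation errors in step one — the degenerate V-statistic remainder and the parameter-estimation linearization — together with the tightness argument near $t=\infty$; once the influence function $g$ is identified, the covariance computation in step three is a mechanical, if lengthy, exercise.
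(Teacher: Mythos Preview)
Your outline is correct and would lead to the stated result, but it takes a longer road than the paper does, and it misses one simplification that is the crux of the paper's argument.

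The paper does not carry an estimation-correction term through the computation. Instead it expands $V_n(t;\widehat\lambda_n)=V_n(t;\lambda)+(\widehat\lambda_n-\lambda)\,\partial_\gamma V_n(t;\gamma)|_{\gamma=\lambda^*}$ and observes, via the law of large numbers for V-statistics, that the derivative converges to
\[
E\big(2t\min(X_1,X_2)e^{-2t\gamma\min(X_1,X_2)}-tX_1e^{-t\gamma X_1}\big)=0,
\]
which is an immediate consequence of the Desu characterization $2\min(X_1,X_2)\stackrel{d}{=}X_1$ itself. Hence $\sqrt n\,V_n(t;\widehat\lambda_n)$ and $\sqrt n\,V_n(t;\lambda)$ are asymptotically equivalent and the influence function is simply $\varphi_1(x;t)$ with no $(x-1)$ term. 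In your notation, $\partial_\lambda E\Phi=0$, so the extra summand you carry would drop out anyway; you would discover this during the covariance calculation, but recognizing it up front removes the ``parameter-estimation linearization'' from your list of obstacles entirely.

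For the functional convergence and tightness, the paper simply invokes the weak-convergence result for V-empirical processes of \cite{silverman1983convergence} and reads off the covariance directly from the kernel. Your hands-on route via bracketing/Lipschitz bounds and the weight $e^{-at}$ is perfectly viable and more self-contained, but the paper's citation-based argument is considerably shorter. Either way, the covariance computation is, as you say, mechanical.
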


\subsection{ Approximate Bahadur slope }

Let $\mathcal{G}=\{G(x;\theta),\;\theta>0\}$ with corresponding densities $\{g(x;\theta)\}$ be a family of alternative distribution functions with finite expectations, such that $G(x,\theta)=1-e^{-\lambda x}$, for some $\lambda>0$, if and only if $\theta=0$, and the regularity conditions for V-statistics with weakly degenerate kernels from \cite[Assumptions WD]{nikitinMetron} are satisfied.

The approximate local Bahadur slopes of  $M^{\mathcal{D}}_{n,a}$ and $L_{n,a}^{\mathcal{D}}$, for close alternatives, are derived in the following theorem.

\begin{theorem}\label{Slopovi}
	For the statistics $M^{\mathcal{D}}_{n,a}$ and
	$L^{\mathcal{D}}_{n,a}$ and a given alternative density $g(x,\theta)$ from $\mathcal{G},$ the local 
	Bahadur approximate slopes are given by
	\begin{enumerate}
	    \item[1)]
	\begin{equation*}
	c^*_M(\theta)=\delta_1^{-1}\int\limits_{0}^{\infty}\int\limits_{0}^{\infty}\tilde{h}_2(x,y)g'_{\theta}(x;0)g'_{\theta}(y;0)dxdy\cdot\theta^2+o(\theta^2), \theta\rightarrow0,
	\end{equation*}
		where $\delta_1$ is the largest eigenvalue of the integral operator $A$ with kernel $\tilde{h}_2$;
	
	\medskip

\item[2)]	\begin{equation*}
	c^*_L(\theta)=\frac{1}{\sup_tK(t,t)}\sup\Big(\int_{0}^{\infty}\tilde{\varphi}_1(x;t)g'_{\theta}(x;0)dx\Big)^2\cdot\theta^2+o(\theta^2), \theta\rightarrow0,
	\end{equation*}
	where  $\tilde{\varphi}_1(x;t)=E(\Phi(\cdot)|X_1=x)$ with $\Phi$ being defined in \eqref{VstatL}.
	\end{enumerate}
\end{theorem}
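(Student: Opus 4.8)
The plan is to compute the two ingredients of the approximate Bahadur slope \eqref{appslope}, namely $a_T$ and $b_T(\theta)$, separately for each statistic, and then expand $b_T(\theta)^2$ in powers of $\theta$ as $\theta\to 0$. For the coefficient $a_T$ I would invoke the classification recalled in the Introduction: by Theorem \ref{raspodelaM} the limiting law of $nM^{\mathcal D}_{n,a}$ under $H_0$ is $6\sum_k\delta_k W_k^2$, so by the cited result of \cite{Zolotarev} the tail parameter satisfies $a_{M}=(6\delta_1)^{-1}$ up to the normalization absorbed into $b$; and by Theorem \ref{raspodelaL} the limit of $\sqrt n L^{\mathcal D}_{n,a}$ is $\sup_{t>0}|\eta(t)|$ for the Gaussian process $\eta$ with covariance $K$, so by \cite{marcus1972sample} we get $a_L=(\sup_t K(t,t))^{-1}$. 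These are the $\delta_1^{-1}$ and $1/\sup_t K(t,t)$ prefactors appearing in the statement (the factor $6$ cancels against the $6$ in the limit law when one tracks the normalization carefully, which is the one bookkeeping point to get right).

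The substance is the computation of the limit in probability $b_T(\theta)=\lim_n T_n$ under the alternative $g(\,\cdot\,;\theta)$, together with its local expansion. For $M^{\mathcal D}_{n,a}$, since it is a V-statistic with estimated parameter whose kernel is weakly degenerate at $\theta=0$ (this is exactly why the Assumptions WD of \cite{nikitinMetron} are imposed), the law of large numbers for V-statistics gives $b_M(\theta)=\iint \tilde h_2(x,y)\,dG(x;\theta)\,dG(y;\theta)$, where the first-order degeneracy $\int \tilde h_1(x)\,dF(x)=0$ under $H_0$ kills the $O(\theta)$ term. Writing $dG(x;\theta)=dF(x)+\theta\,g'_\theta(x;0)\,dx+o(\theta)$ and substituting, the surviving leading term is $\theta^2\iint \tilde h_2(x,y)g'_\theta(x;0)g'_\theta(y;0)\,dx\,dy$, and then $c^*_M(\theta)=a_M\,b_M(\theta)^2=\delta_1^{-1}\big(\iint\tilde h_2 g'_\theta g'_\theta\big)\theta^2+o(\theta^2)$ after the square is itself expanded (the square of an $O(\theta^2)$ quantity contributes only to $o(\theta^2)$, so effectively $b_M(\theta)^2\sim$ that same bilinear form—one must be careful here that $b_M$ is already $O(\theta^2)$, so $c^*_M$ is $O(\theta^4)$ unless $\tilde h_2$ itself is not degenerate; the statement as written treats $b_M$ as the object being squared, so I would follow the convention used for $L^2$-type statistics in \cite{cuparic2018new,MilosevicObradovicPapers} where the slope is read off from the first non-vanishing term). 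For $L^{\mathcal D}_{n,a}$, for each fixed $t$ the bracketed expression is the V-statistic \eqref{VstatL} with kernel $\Phi$; its first projection $\tilde\varphi_1(x;t)$ is non-degenerate, so the LLN gives, for each $t$, the pointwise limit $\int\tilde\varphi_1(x;t)\,dG(x;\theta)=\theta\int\tilde\varphi_1(x;t)g'_\theta(x;0)\,dx+o(\theta)$ (the $O(1)$ term vanishes since under $H_0$ the two empirical Laplace transforms coincide in the limit). Taking the supremum over $t$ and then squaring yields $b_L(\theta)^2=\theta^2\sup_t\big(\int\tilde\varphi_1(x;t)g'_\theta(x;0)\,dx\big)^2+o(\theta^2)$, and multiplying by $a_L$ gives part 2).

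The main obstacle is the uniform-in-$t$ control needed to pass from the pointwise limits of the $L^{\infty}$-statistic to a limit of the supremum, and the justification that the supremum and the $\theta\to 0$ limit may be interchanged: one needs the family $\{t\mapsto \frac1{n^2}\sum\Phi(X_{i_1},X_{i_2};t,a,\hat\lambda_n)\}$ to be tight/equicontinuous (with the weight $e^{-at}$ providing the decay as $t\to\infty$ and continuity of $\Phi$ in $t$ handling compact ranges), so that $\sup_t$ is a continuous functional on the relevant path space and the continuous mapping theorem applies; this is the analogue of the argument underlying Theorem \ref{raspodelaL} and I would reduce to it rather than redo the empirical-process estimates. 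A secondary technical point is verifying that replacing $\hat\lambda_n$ by its limit $\lambda$ does not affect either slope to order $\theta^2$—this follows because the sample mean is consistent and the kernels depend smoothly on $\lambda$, exactly as in the weakly-degenerate V-statistic framework of \cite{nikitinMetron}. The differentiation under the integral sign that produces $g'_\theta(x;0)$ is licensed by the regularity conditions assumed for the family $\mathcal G$. Everything else is the routine substitution and collection of leading terms indicated above.
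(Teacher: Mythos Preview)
Your treatment of part 2) is essentially the paper's: $a_L = 1/\sup_t K(t,t)$ via Marcus--Shepp, and $b_L(\theta)$ is governed to first order by the projection $\tilde\varphi_1$. The paper secures the uniform-in-$t$ passage to the limit by invoking the Glivenko--Cantelli theorem for $V$-statistics rather than a separate tightness argument, but the substance is the same.

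Part 1) has a genuine gap, which you flag in your parenthetical but do not actually resolve. The Bahadur formula $c^*_T = a_T b_T^2$ requires a statistic $T_n$ with a nondegenerate limit law under $H_0$ and with $T_n/\sqrt n \to b_T(\theta)$ under $P_\theta$; the raw $M^{\mathcal D}_{n,a}$ does not fit this frame, since it is $nM^{\mathcal D}_{n,a}$ that converges. The paper therefore works with $\widetilde M_n := \sqrt{nM^{\mathcal D}_{n,a}}$: under $H_0$ one has $\widetilde M_n \Rightarrow (6\sum_k\delta_k W_k^2)^{1/2}$, so $a_{\widetilde M} = 1/(6\delta_1)$ by Zolotarev, while under $P_\theta$ one has $\widetilde M_n/\sqrt n = \sqrt{M^{\mathcal D}_{n,a}} \to \sqrt{b_M(\theta)}$. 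Hence $c^*_M = a_{\widetilde M}\cdot b_M(\theta)$, \emph{not} $a_M\, b_M(\theta)^2$---this is why the slope is $O(\theta^2)$ rather than the $O(\theta^4)$ you worry about. Moreover, the LLN gives $b_M(\theta) = E_\theta[H(X_1,\dots,X_4;a,\mu(\theta))]$, a four-fold integral of the full kernel $H$, not the double integral of $\tilde h_2$ against $dG\otimes dG$ that you write down. Expanding $E_\theta[H]$ to second order in $\theta$, the degeneracy $\tilde h_1\equiv 0$ kills the $O(\theta)$ term and the $g''_{\theta\theta}$ contribution at $O(\theta^2)$, while the $\binom{4}{2}=6$ cross terms yield $6\int_0^\infty\!\int_0^\infty\tilde h_2(x,y)\,g'_\theta(x;0)g'_\theta(y;0)\,dx\,dy\cdot\theta^2$. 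This $6$ is precisely what cancels the $6$ in $a_{\widetilde M}$ to produce the clean $\delta_1^{-1}$; the cancellation you allude to comes from the square-root normalization together with the combinatorics of the degree-$4$ kernel, not from any ``convention'' about first nonvanishing terms.
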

\begin{proof}
	See Appendix A. 
\end{proof}

To calculate the slope of $M^{\mathcal{D}}_{n,a}$, one needs to find  the largest eigenvalue $\delta_1$. Since it cannot be obtained analytically, we use the  approximation introduced in \cite{bozin}. The procedure utilizes the fact that $\delta_1$ is the limit of the sequence of the largest eigenvalues of linear operators defined by $(m+1)\times(m+1)$ matrices $M^{(m)}=||m_{i,j}^{(m)}||,\; 0\leq i\leq m, 0\leq j\leq m$, where
\begin{equation}\label{MatAppr}
m_{i,j}^{(m)}=\widetilde{h}_2\bigg(\frac{Bi}{m},\frac{Bj}{m}\bigg)\sqrt{e^{\frac{B(i)}{m}}-e^{\frac{B(i+1)}{m}}}\cdot\sqrt{e^{\frac{B(j)}{m}}-e^{\frac{B(j+1)}{m}}}\cdot \frac{1}{1-e^{-B}},
\end{equation}
when $m$ tends to infinity   and $F(B)$ approaches 1.

 \section{Other exponentiality tests -- a partial review}\label{sec: testStat}
In this section we present test statistics of some classical and some recent goodness-of-fit tests for the exponential distribution, along with their Bahadur local approximate slopes. For some of the test statistics, the Bahadur  local approximate slope (or exact slope which locally coincides with the approximate one) is available in the literature and for the others we derive them in Appendix B.

As indicated in Introduction, we classify the tests according to their asymptotic distribution. 
	The first group  contains  asymptotically normally distributed statistics.

\begin{itemize}
\item The test proposed by \cite{epps1986test} based on the expected value of the exponential density, with test statistic \begin{equation*}
		EP_n=\sqrt{48}\bigg(\frac{1}{n}\sum\limits_{j=1}^ne^{-\frac{X_j}{\overline{X}_n}}-\frac{1}{2}\bigg).
		\end{equation*}

Its approximate Bahadur slope is
\begin{equation*}
    c^*_{EP}(\theta)=3\Bigg(\int\limits_{0}^\infty \Big(4e^{-x}+x\Big)g'_{\theta}(x;0)dx\Bigg)^2\cdot\theta^2+o(\theta^2)
\end{equation*}

\item The score test for the Weibull shape parameter proposed by \cite{cox1984analysis}  
\begin{equation*} 
		CO_n=1+\frac{1}{n}\displaystyle\sum_{i=1}^n\bigg(1-\frac{X_i}{\overline{X}_n}\bigg)\log \frac{X_i}{\overline{X}_n}.
		\end{equation*}
Its approximate slope is
\begin{equation*}
    c^*_{CO}(\theta)=\frac{6}{\pi^2}\Bigg(\int\limits_{0}^\infty \Big((1-x)\log x+(1-\gamma)x\Big)g'_{\theta}(x;0)dx\Bigg)^2\cdot\theta^2+o(\theta^2)
\end{equation*}	
		
\item A test based on Gini coefficient from \cite{gail1978scale} 
\begin{equation*} 
		G^*_n=\Big|\frac{1}{2n(n-1)\overline{X}_n}\sum_{i,j=1}^n|X_i-X_j|-\frac{1}{2}\Big|.
		\end{equation*}	

The approximate slope is (see \cite{nikitin1996bahadur})
 \begin{equation*}
    c^*_{G}(\theta)=12\Bigg(\int\limits_0^\infty \Big(2e^{-x}+\frac{x}{2}\Big)g'_{\theta}(x;0)dx\Bigg)^2\theta^2+o(\theta^2).
\end{equation*}
		
\item The score test for the gamma shape parameter proposed by \cite{moran1951random} and \cite{tchirina2005bahadur}  
\begin{equation*} 
	MO_n=\Big|\gamma+\frac{1}{n}\sum_{i=1}^n\log \frac{X_i}{\overline{X}_n}\Big|.
		\end{equation*}		
Its approximate slope is (see  \cite{tchirina2005bahadur}) 
\begin{equation*}
    c^*_{MO}(\theta)=\Big(\frac{\pi^2}{6}-1\Big)^{-1}\Bigg(\int\limits_0^\infty (\log x-x)g'_{\theta}(x;0)dx\Bigg)^2\theta^2+o(\theta)
\end{equation*}	

\item Characterization based integral-type tests

Let the relation
	\begin{align}
	    \label{ChEqD}\omega_1(X_1,...,X_m)\overset{d}{=}\omega_2(X_1,...,X_p),
	\end{align}
where $X_1,\ldots,X_{\max\{m,p\}}$ are i.i.d. random variables, characterize the exponential distribution. Then the following types of test statistics have been proposed:

\begin{equation*}
    I_n=\int_{0}^{\infty}\Big(H_n^{(\omega_1)}(t)-H_n^{(\omega_2)}(t)\Big)dF_n(t),
\end{equation*}
where $H_n^{(\omega_1)}(t)$ and $H_n^{(\omega_2)}(t)$ are $V$-empirical distribution functions of $\omega_1$ and $\omega_2$, respectively,  and $F_n$ is the  empirical distribution function, and 

\begin{equation}\label{laplas}
    J_{n,a}=\int_{0}^{\infty}\Big(L_n^{(\omega_1)}(t)-L_n^{(\omega_2)}(t)\Big)e^{-at}dt,
\end{equation}
where $L_n^{(\omega_1)}(t)$ and $L_n^{(\omega_2)}(t)$ are $V$-empirical Laplace transforms of  $\omega_1$ and $\omega_2$, respectively, applied to the scaled sample, and $a>0$ is the tuning parameter.

From these groups of tests we take the following representatives

\begin{itemize}
    \item $I_{n,k}^{(1)}$, proposed in \cite{jovanovic}, based on the Arnold and Villasenor characterization, where $\omega_1(X_1,...,X_k)=\max(X_1,...,X_k)$ and $\omega_2(X_1,...,X_k)=X_1+\frac{X_2}{2}+\cdots \frac{X_k}{k}$ (see \cite{arnold2013exponential}, \cite{milovsevic2016some});
    \item $I_{n}^{(2)}$, proposed in \cite{Publ}, based on the Milo\v sevi\'c-Obradovi\'c characterization,  where $\omega_1(X_1,X_2)=\max(X_1.X_2)$ and $\omega_2(X_1,X_2,X_3)=\min(X_1,X_2)+X_3$ (see \cite{milovsevic2016some});
    \item $I_n^{(3)}$, proposed in \cite{bojanaMetrika}, based on the Obradovi\'c characterization, where $\omega_1(X_1,X_2,X_3)=\max(X_1,X_2,X_3)$ and $\omega_2(X_1,X_2,X_3,X_4)=X_1+{\rm med}(X_2,X_3,X_4)$ (see \cite{obradovic2014three});
    \item $I_n^{(4)}$, proposed in \cite{volkova2015goodness}, based on the Yanev-Chakraborty characterization, where $\omega_1(X_1,X_2,X_3)=\max(X_1,X_2,X_3)$ and $\omega_2(X_1,X_2,X_3)=\frac{X_1}{3}+\max(X_2,X_3)$ (see \cite{yanev2013characterizations});
    \item $I_n^{\mathcal{D}}$ based on the Desu characterization \ref{desuCh} characterization;
     \item $I_n^{\mathcal{P}}$ based on the Puri-Rubin characterization, where $\omega_1(X_1)=X_1$ and $\omega_2(X_1,X_2)=|X_1-X_2|$ (see \cite{puri1970characterization});
    \item $J_{n,a}^{\mathcal{D}}$, proposed in \cite{MilosevicObradovicPapers}, based on the Desu Characterization \ref{desuCh};
    \item $J_{n,a}^{\mathcal{P}}$, proposed in \cite{MilosevicObradovicPapers}, based on the Puri-Rubin characterization.
\end{itemize}

Since these statistics are very similar, we give  general expressions for their Bahadur approximate slopes. 

Statistics $I_n$ are non-degenerate V-statistics with some kernel $\Psi$ and their approximate slope is (see \cite{nikitinMetron})
\begin{align}\label{integralniNagib}
    c^{*}_I(\theta)=\frac{1}{\rm Var\psi(X_1)}\Big(\int \psi(x)g'_{\theta}(x;0)dx\Big)^2 \cdot\theta^2 + o(\theta^2),
\end{align}
where $\psi(x)=E\Psi(\cdot |X_1=x)$.

Statistics $J_{n,a}$ are, due to the sample scaling, non-degenerate V-statistics with estimated parameters. Nevertheless, the formula \eqref{integralniNagib} is applicable here also, with $\Psi$ being the kernel of the test statistic as if the scaling were done using the real value of $\lambda$ (see \cite{MilosevicObradovicPapers} for details).
		\end{itemize}

The second group contains statistics whose limiting distribution is the supremum of some centered Gaussian process.

\begin{itemize}
	\item Lilliefors modification of the Kolmogorov-Smirnov test \begin{equation*}
		KS_n=\sup|F_n(t)-(1-e^{-\frac{t}{\bar{X}}})|.
		\end{equation*}
	The approximate slope is  (see \cite{nikitin2007lilliefors})
	\begin{equation*}
    c^*_{KS}(\theta)=\frac{1}{\sup\limits_{x\geq0}(e^{-2x}(e^x-x^2-1))}\sup\limits_{x\geq0}\Big(xe^x\int_0^\infty G'_{\theta}(u;0)du-G(x;0)dx\Big)^2\cdot \theta^2+o(\theta^2).
\end{equation*}	

	\item Characterization based supremum-type tests 
	
	Using the characterizations of the type \eqref{ChEqD}, another proposed type of test statistics is
	
\begin{equation*}
    D_n=\sup_{t>0}\Big|H_n^{(\omega_1)}(t)-H_n^{(\omega_2)}(t)\Big|.
\end{equation*}	  

    From this group of tests we take the following representatives:
    
 $D_{n,k}^{(1)}$, proposed in \cite{jovanovic};
    $D_{n}^{(2)}$, proposed in \cite{Publ};
   $D_n^{(3)}$, proposed in \cite{bojanaMetrika};
     $D_n^{(4)}$, proposed in \cite{volkova2015goodness}, based on the same characterizations as for the respective integral-type statistics, $D_{n}^{\mathcal{D}}$ based on Desu characterization \ref{desuCh} and  $D_{n}^{\mathcal{P}}$ based on Puri-Rubin characterization (\cite{puri1970characterization}).

    Statistics from this group are asymptotically distributed as a supremum of some non-degenerate V-empirical processes, and the expression in the absolute parenthesis, for a fixed $t$ is a V-statistic with some kernel $\Psi(X_1,\ldots,X_{\max\{m,p\}};t)$. Their approximate slopes is (see \cite{nikitin2010large})
  \begin{align*}
    c^{\star}_D(\theta)=\frac{1}{\sup_{t>0}{\rm Var}\psi(X_1;t)}\sup_{t>0}\Big(\int \psi(x;t)g'_{\theta}(x;0)dx\Big)^2 \cdot\theta^2 + o(\theta^2),
\end{align*}
where $\psi(x;t)=E\Psi(\cdot;t |X_1=x)$.

\end{itemize}

The third group contains statistics whose limiting distribution is an infinite linear combination of i.i.d. chi-squared random variables. Each of the presented statistics, except the last one, is of the form 
\begin{align*}
    T_n=\int_0^\infty U_n^2(t;\hat{\mu})w(t)dt,
\end{align*}
where $U_n(t;\hat{\mu})$ is an empirical process of order 1 with estimated parameter. It also can be viewed as a weakly degenerate V-statistics with estimated parameters, with
some kernel $\Phi(X_1,X_2;\hat{\mu})$, where $\mu=E_{\theta}X_1$. Then, the Bahadur approximate slope of such statistic is

\begin{align}\label{BAS_L2}
    c^*(\theta)&=(2\delta_\Phi)^{-1}\int\limits_0^\infty\int\limits_0^\infty\Bigg(2\Phi(x,y;1)g'_{\theta}(x;0)g'_{\theta}(y;0)+4\Phi'_{\mu(\theta)}(x,y;1)\mu'_{\theta}(0)g'(x;0)g(y;0)\\&+\Phi''_{\mu^2(\theta)}(x,y;1)(\mu'_\theta(0))^2g(x;0)g(y;0)\Bigg)dxdy \cdot \theta^2+o(\theta^2)\nonumber,
\end{align}
where $\delta_T$ is the largest eigenvalue of the  integral operator $Aq(s)\int_0^\infty K(s,t)w(t)q(t)dt$, where $K(s,t)=\lim_{n\to\infty}n{\rm Cov}(U_n(t),U_n(s))$ is the limiting covariance function.

Hence it suffices to present only the kernels and limiting covariance functions  of test statistics. We consider the following tests:

\begin{itemize}
    \item Lilliefors modification of the Cramer-von Mises test
    \begin{equation*} 
    \omega^2_n=\int_{0}^{\infty}(F_n(t)-(1-e^{-\frac{t}{\bar{X}}}))^2\frac{1}{\bar{X}}e^{-\frac{t}{\bar{X}}}dt.
		\end{equation*}

		Its kernel is
				\begin{equation*}
		    \Phi_{\omega^2}(x,y;\mu(\theta))=e^{-\max(\frac{x}{\mu(\theta)},\frac{y}{\mu(\theta)})}-e^{-\frac{x}{\mu{\theta}}}-e^{-\frac{y}{\mu(\theta)}}+\frac{1}{2}(e^{-2\frac{x}{\mu(\theta)}}+e^{-2\frac{y}{\mu(\theta)}})+\frac{1}{3};
		\end{equation*}
		and the covariance function is
	\begin{align*}
	    K_{\omega^2}(s,t)&=e^{-\frac{3}{2}s-\frac{3}{2}t}(e^{\min(s,t)}-1-st).
	\end{align*}
		
	\item Lilliefors modification of the Anderson-Darling	test 
	\begin{equation*} 
	{\rm AD}_n=\int_{0}^{\infty}\frac{(F_n(t)-(1-e^{-\frac{t}{\bar{X}}}))^2}{\bar{X}(1-e^{-\frac{t}{\bar{X}}})}dt.
		\end{equation*}
		
Its kernel is

		\begin{equation*}
		    \Phi_{\rm AD}(x,y;\mu(\theta))=\frac{x}{\mu(\theta)}+\frac{y}{\mu(\theta)}-1-\log(e^{\max(\frac{x}{\mu(\theta)},\frac{y}{\mu(\theta)})}-1).
		\end{equation*}
	and the covariance function is
	\begin{align*}
	    K_{AD}(s,t)&=\frac{e^{-s-t}(e^{\min(s,t)}-1-st)}{\sqrt{(1-e^{-s})(1-e^{-t})}}.
	\end{align*}

			\item  A test proposed by \cite{baringhaus1991class}
		\begin{equation*}
		{\rm BH}_n=\int_0^\infty\Big((1+t)\psi'_n(t)+\psi_n(t)\Big)^2e^{-at}dt,
		\end{equation*}
		where $\psi_n(t)$ is the empirical Laplace transform. Its kernel is
		\begin{equation*}
		    \Phi_{\rm BH}(x,y;\mu(\theta))=\frac{(1-x)(1-y)}{x+y+a\mu(\theta)}-\frac{x\mu(\theta)+y\mu(\theta)-2xy}{(a\mu(\theta)+x+y)^2}+\frac{2xy\mu(\theta)}{(a\mu(\theta)+x+y)^3};
		\end{equation*}
			and the covariance function is
	\begin{align*}
	    K_{BH}(s,t)&=\frac{1+s+t+2st}{(1+s+t)^3}-\frac{1}{(1+s)^2(1+t)^2}.
	\end{align*}
	
	
		\item The test  proposed by \cite{henze1992new}
		\begin{equation*}
		{\rm HE}_n=\int_0^\infty\Big(\psi_n(t)-\frac{1}{1+t}\Big)^2e^{-at}dt;
		\end{equation*}
		
		Its kernel is
		\begin{align*}
		    \Phi_{\rm HE}(x,y;\mu(\theta))&=1+\frac{\mu(\theta)}{a\mu(\theta)+x+y}+ae^aEi(-a)+e^{a+\frac{x}{\mu(\theta)}}Ei(-(a+\frac{x}{\mu(\theta)}))\\&+e^{a+\frac{y}{\mu(\theta)}}Ei(-(a+\frac{y}{\mu(\theta)}));
		\end{align*}
and the covariance function is
	\begin{align*}
	    K_{HE}(s,t)&=\frac{s^2t^2}{(s+t+1)(s+1)^2(t+1)^2}.
	\end{align*}
		
		\item The test proposed by \cite{henze2002tests}
		\begin{equation*}
		W_n=\int_0^\infty\Big(\psi_n(t)-\frac{1}{1+t}\Big)^2(1+t)^2e^{-at}dt.
		\end{equation*}
		Its kernel is
		\begin{align*}
		    \Phi_{W}(x,y;\mu(\theta))&=\frac{1}{a}-\frac{\mu(\theta)(\mu(\theta)(1+a)+x)}{(a\mu(\theta)+x)^2}- \frac{\mu(\theta)(\mu(\theta)(1+a)+y)}{(a\mu(\theta)+y)^2}\\&+\frac{2\mu^3(\theta)}{(a\mu(\theta)+x+y)^3}+\frac{2\mu^2(\theta)}{(a\mu(\theta)+x+y)^2}+\frac{\mu(\theta)}{a\mu(\theta)+x+y};
		\end{align*}
	and the covariance function is
	\begin{align*}
	    K_{W}(s,t)&=\frac{s^2t^2}{(s+t+1)(s+1)(t+1)}.
	\end{align*}
		
		\item Two tests proposed by \cite{henze2002goodness}
		\begin{equation*}
		{\rm HM}_n=\int_0^\infty(s_n(t)-tc_n(t))^2\omega_{i}(t)dt, \;\;i=1,2,
		\end{equation*}
		where $\omega_1(t)=e^{-at}$ i $\omega_2(t)=e^{-at^2}$. Their kernels are
		\begin{equation*}
		\begin{split}
    \Phi_{\rm HM1}(x,y;\mu(\theta))=&\frac{a\mu^2(\theta)}{2(a^2\mu^2(\theta)+(x-y)^2)}-\frac{a\mu^2(\theta)}{2(a^2\mu^2(\theta)+ (x+y)^2)}\\ &+a\frac{a^2\mu^2(\theta)-3(x-y)^2}{(a^2\mu^2(\theta)+(x-y)^2)^3}+a\frac{a^2\mu^2(\theta)-3(x+y)^2}{(a^2\mu^2(\theta)+(x+y)^2)^3} \\&- \frac{2a\mu^3(\theta)(x + y)}{(a^2\mu^2(\theta) +(x + y)^2)^2},
    \end{split}
\end{equation*}
and 
		\begin{equation*}
		    \begin{split}
    \Phi_{\rm HM2}(x,y;\mu(\theta))=\frac{\sqrt{\pi}}{4\sqrt{a}}&\Bigg(\bigg(\frac{1}{2a}-\frac{x+y}{a\mu(\theta)}-\frac{(x-y)^2}{4a^2\mu^2(\theta)}-1\bigg)e^{-\frac{(x+y)^2}{4a\mu^2(\theta)}}\\&+\bigg(1+\frac{1}{2a}-\frac{(x-y)^2}{4a^2\mu^2(\theta)}\bigg)e^{-\frac{(x-y)^2}{4a\mu^2(\theta)}}\Bigg);
    \end{split}
		\end{equation*}
		and the covariance function is
	\begin{align*}
	    K_{HM}(s,t)&=\frac{st(s^2+t^2+1)}{(1+(s-t)^2)(1+(s+t)^2)}-\frac{st}{(1+s^2)(1+t^2)}.
	\end{align*}
	
		\item Characterization based $L^2$-type test proposed by \cite{cuparic2018new}.

		\begin{equation}\label{laplasSort}
    M^{\mathcal{P}}_{n,a}=\int_{0}^{\infty}\Big(L_n^{(1)}(t)-L_n^{(2)}(t)\Big)^2e^{-at}dt.
\end{equation}
Its slope is 
\begin{equation*}
		\begin{aligned}
		    c^*_{M}(\theta)&=(2\delta_1)^{-1}\int\limits_{0}^{\infty}\int\limits_{0}^{\infty}\bigg[\frac{1}{6}e^{a-x-y}\text{Ei}(-a) \Big(a (e^x-2)(e^y-2)-e^x-e^y+4\Big)\\& +\frac{1}{6}e^{-a-x-y}\Big(\text{Ei}(a) (4 a+e^x+e^y-4)-(\text{Ei}(a+x) (4 (a+x-1)+e^y)\\&+\text{Ei}(a+y) (4 (a+y-1)+e^x)-4 (a+x+y-1) \text{Ei}(a+x+y))\Big)-\frac{1}{2}\\&+\frac{1}{3}(e^{-x}+e^{-y})+\frac{1}{6(a+x+y)}\bigg]g'(x;0)g'(y;0)dxdy\cdot\theta^2+o(\theta^2).
		\end{aligned}
		\end{equation*}
	\end{itemize}

\section{Comparison of efficiencies}

In this section we calculate approximate  local relative Bahadur efficiencies  of test statistics introduced in Sections \ref{sec: testNewStat} and \ref{sec: testStat} with respect to the likelihood ratio test (see \cite{bahadur1967rates}). Likelihood ratio tests are known to have optimal Bahadur efficiencies and they are therefore used as benchmark for comparison.

The alternatives we consider are the following:
\begin{itemize}
		\item a Weibull distribution with  density
		\begin{equation}\label{weibull}
		g(x,\theta)=e^{-x^{1+\theta}}(1+\theta)x^\theta,\theta>0,x\geq0;
		\end{equation}
		\item a gamma distribution with density
		\begin{equation}\label{gamma}
		g(x,\theta)=\frac{x^\theta e^{-x}}{\Gamma(\theta+1)},\theta>0,x\geq0;
		\end{equation}
		\item a linear failure rate (LFR) distribution with density 
		\begin{equation}\label{lfr}
		g(x,\theta)=e^{-x-\theta\frac{x^2}{2}}(1+\theta x),\theta>0,x\geq0;
		\end{equation}
		\item a mixture of exponential distributions with negative weights (EMNW($\beta$)) with density
		\begin{equation*}
		g(x,\theta)=(1+\theta)e^{-x}-\theta\beta e^{-\beta x},\theta\in\left(0,\frac{1}{\beta-1}\right],x\geq0;
		\end{equation*}
	\end{itemize}

	\begin{figure}[!ht]
	\begin{center}
	\includegraphics[scale=0.60]{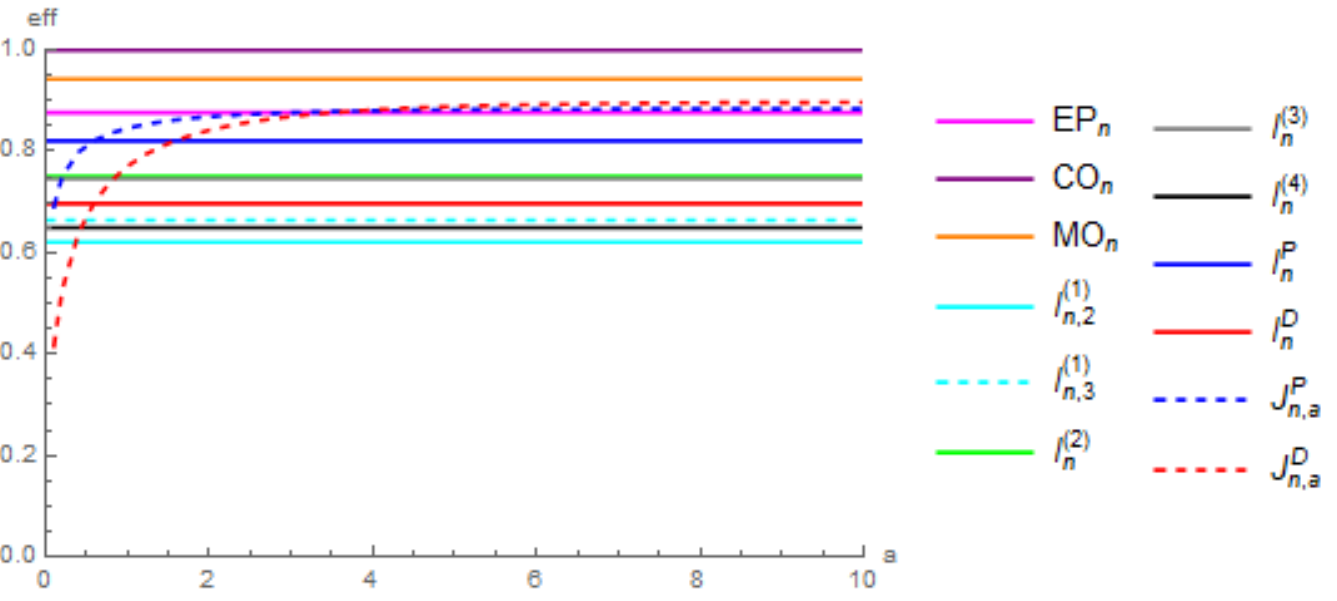}
	\includegraphics[scale=0.55]{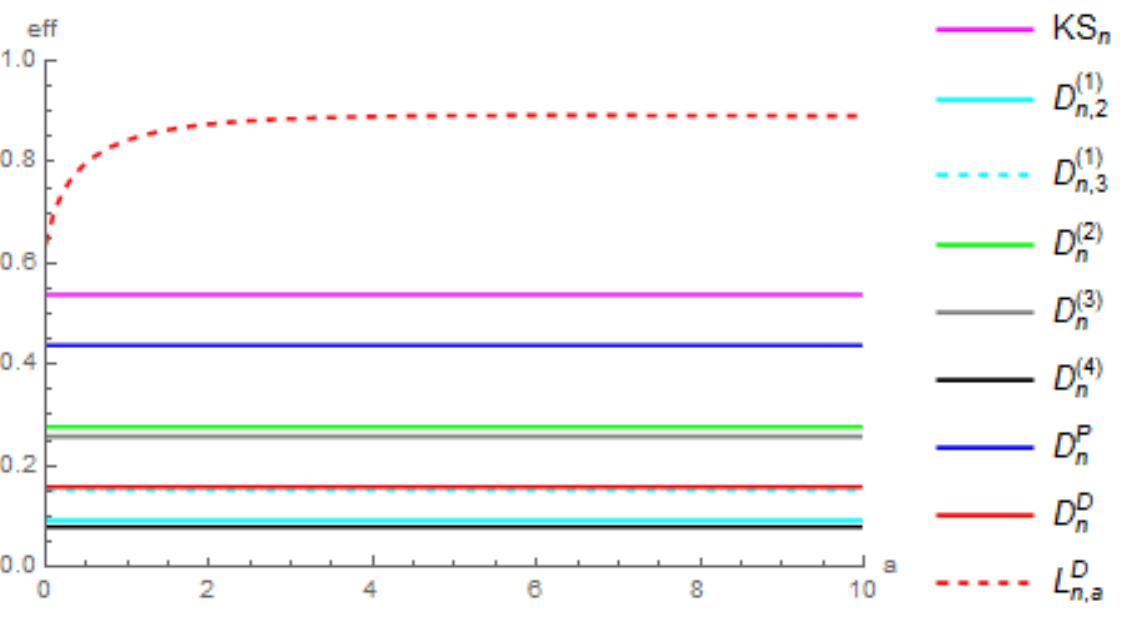}
	\includegraphics[scale=0.55]{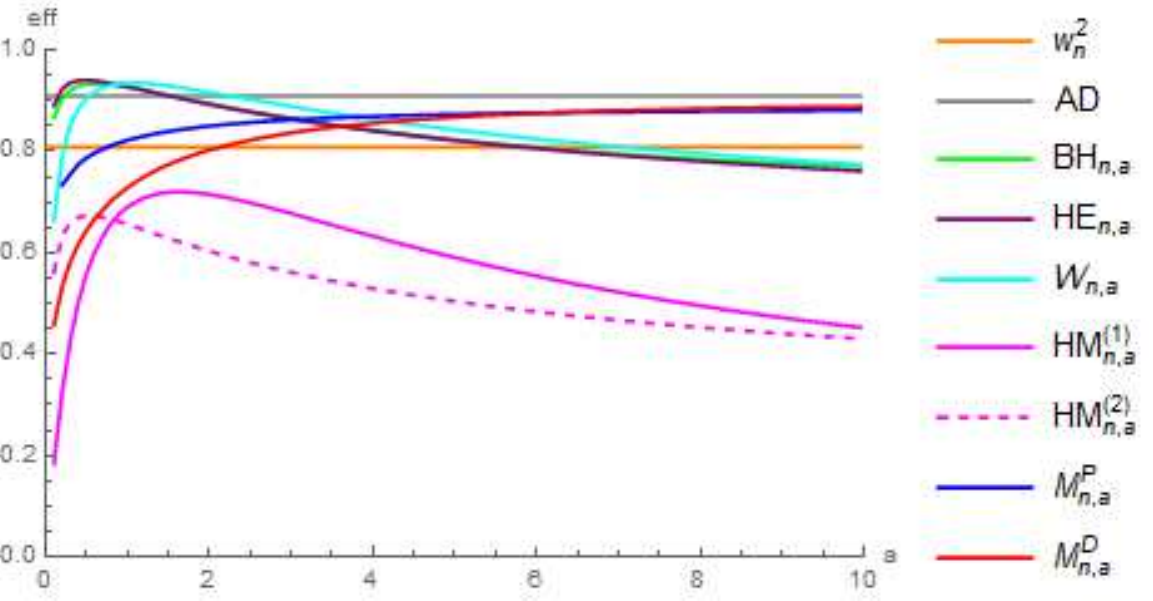}
		\caption{Local approximate Bahadur efficiencies w.r.t. LRT for a Weibull alternative}
		\label{fig: appslopeV1}
	\end{center}
	
\end{figure}


	\begin{figure}[!ht]
	\begin{center}
	\includegraphics[scale=0.60]{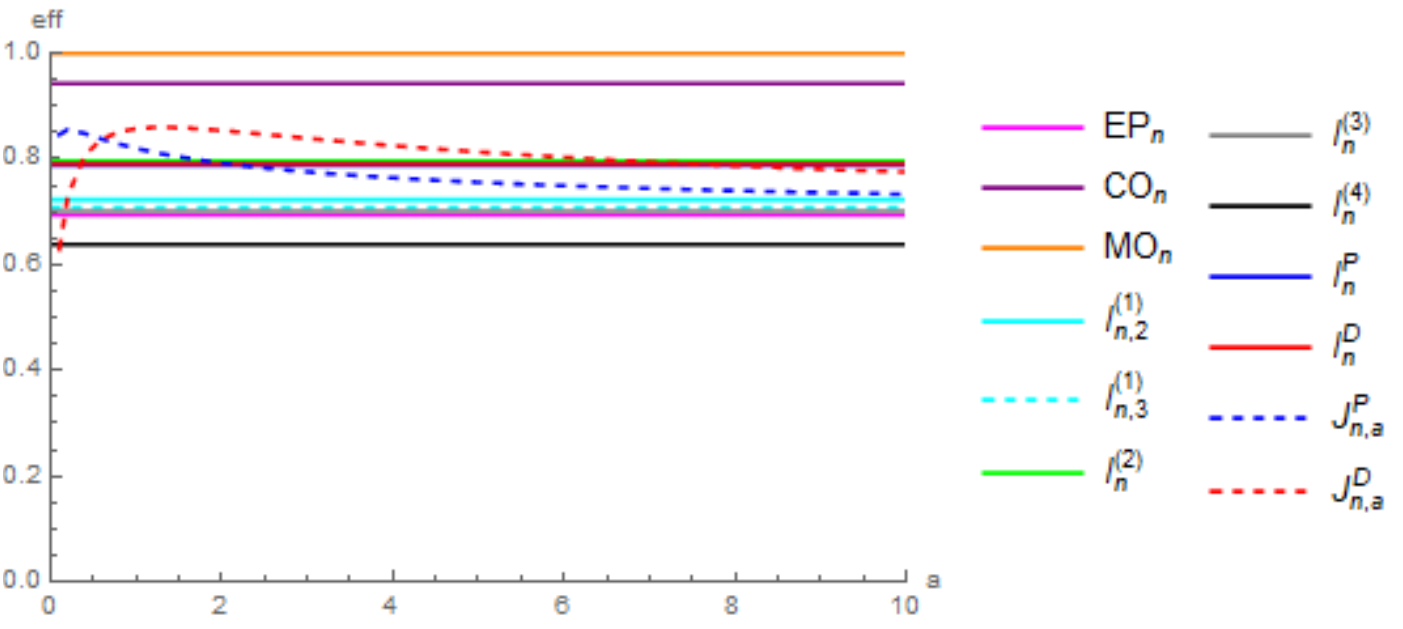}
	\includegraphics[scale=0.55]{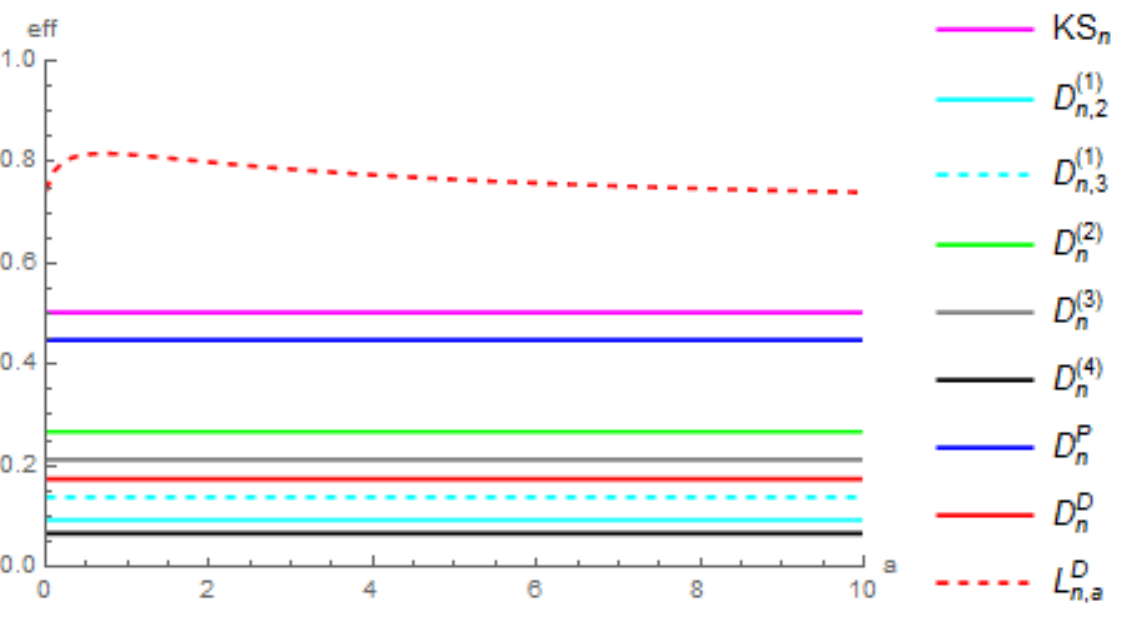}
	\includegraphics[scale=0.55]{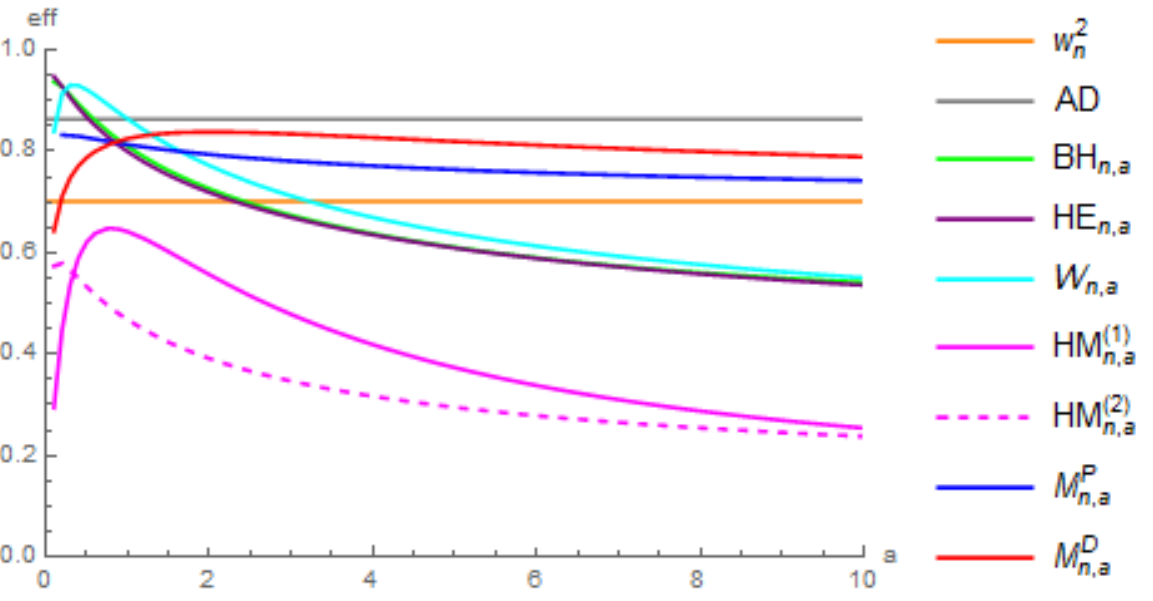}
		\caption{Local approximate Bahadur efficiencies w.r.t. LRT for a Gamma alternative}
		\label{fig: appslopeG1}
	\end{center}
	\end{figure}

	\begin{figure}[!ht]
	\begin{center}
	\includegraphics[scale=0.60]{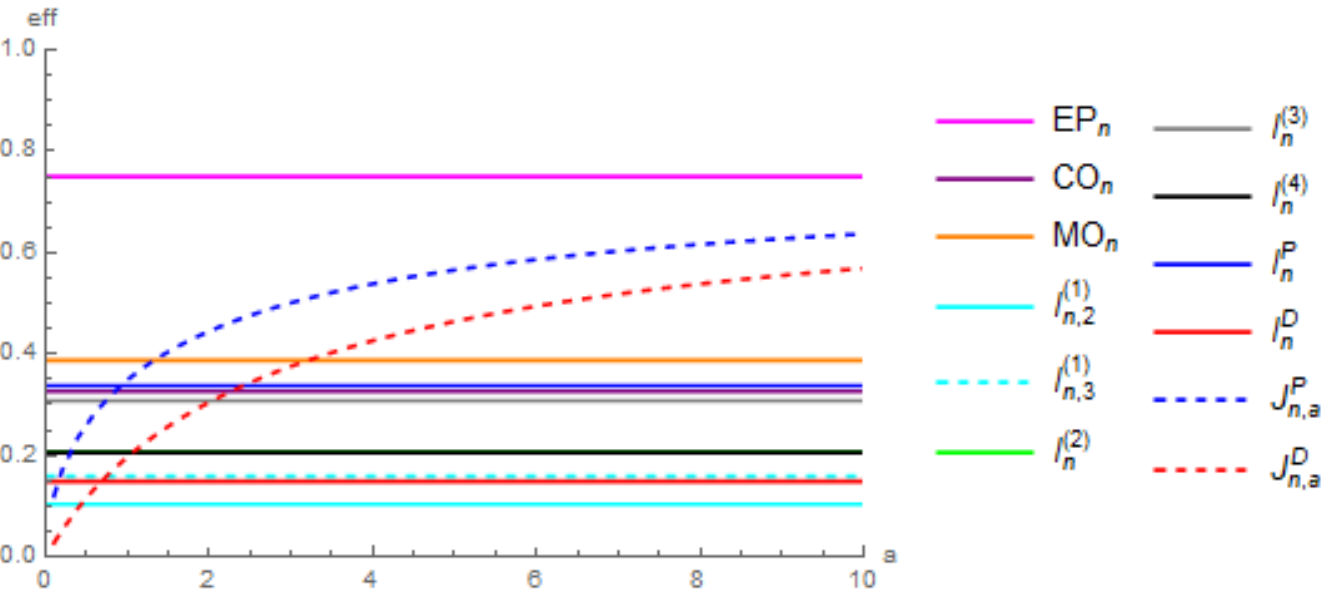}
	\includegraphics[scale=0.55]{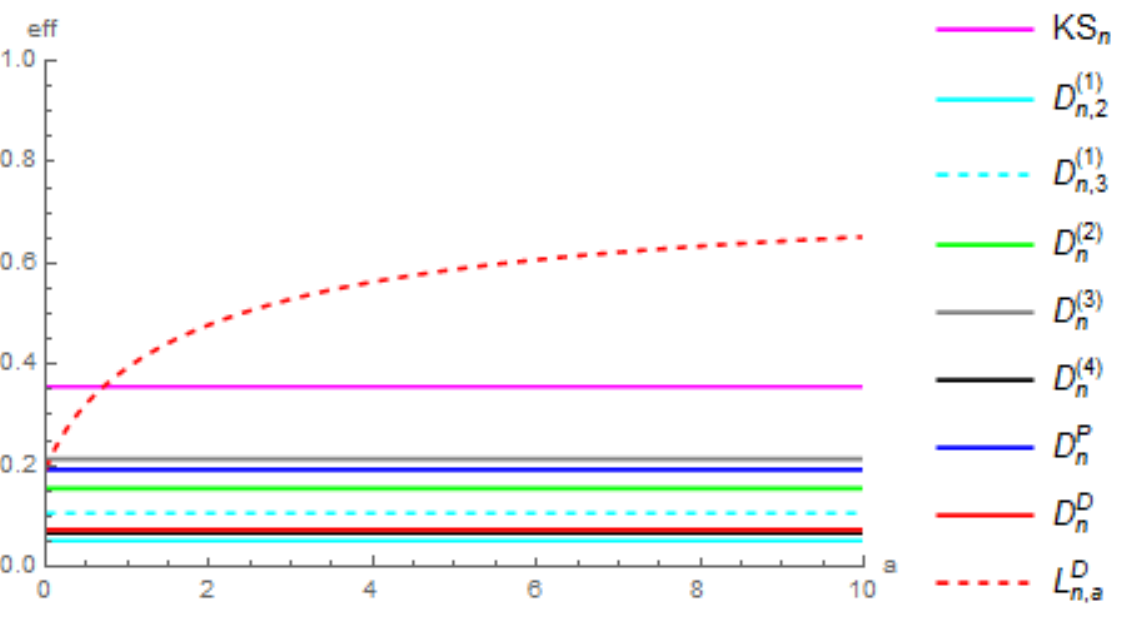}
	\includegraphics[scale=0.55]{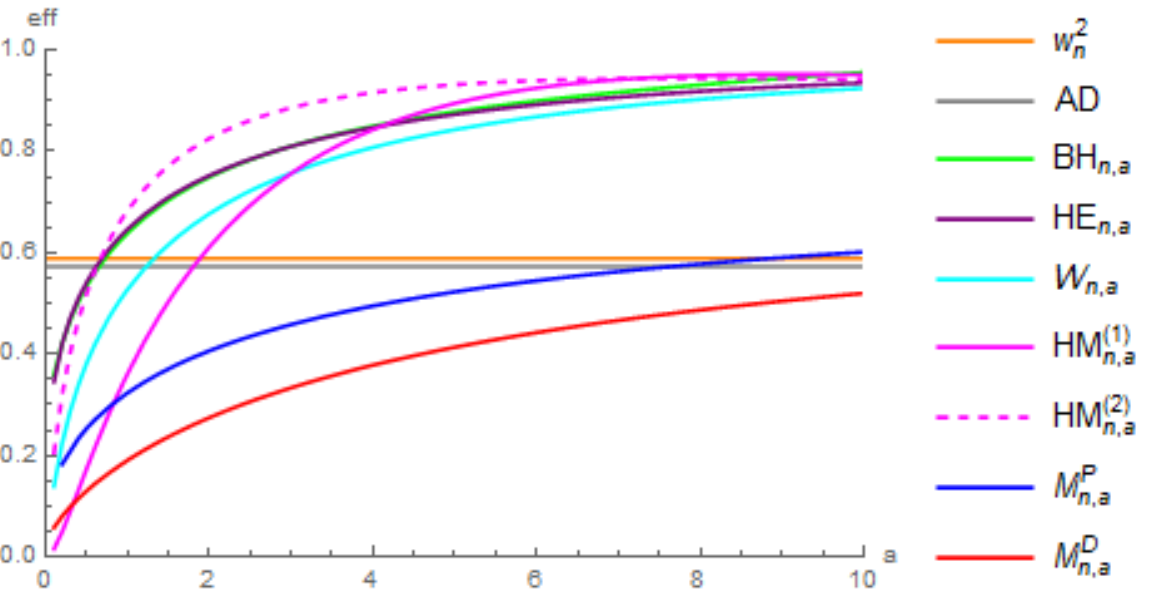}
		\caption{Local approximate Bahadur efficiencies w.r.t. LRT for a LFR alternative}
		\label{fig: appslopeL1}
	\end{center}
\end{figure}

\begin{figure}[!ht]
	\begin{center}
	\includegraphics[scale=0.60]{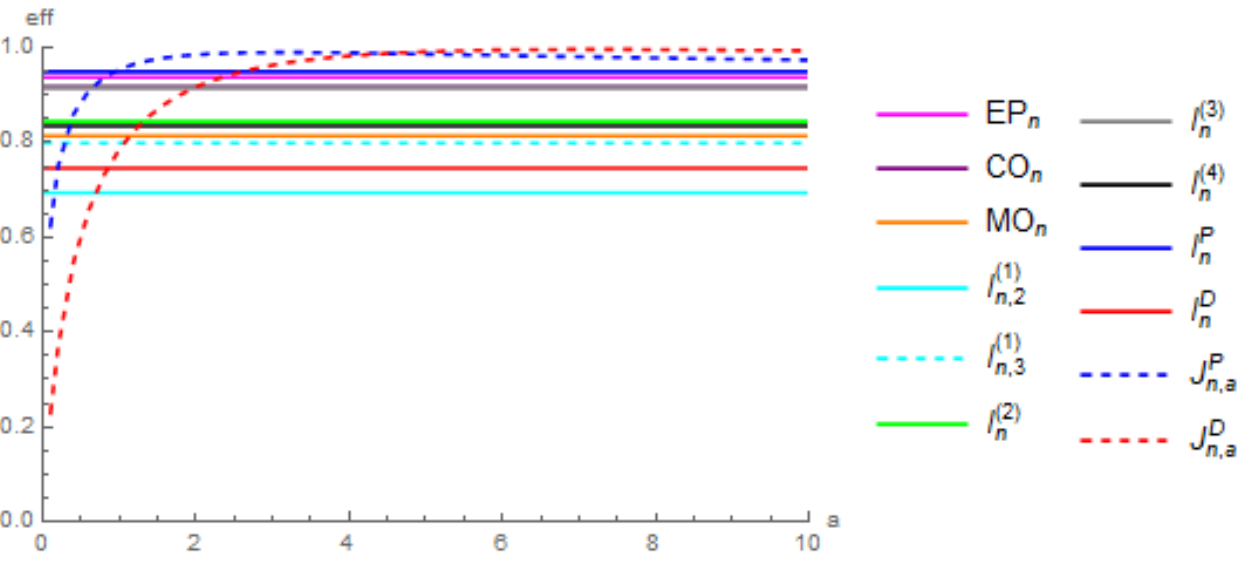}
	\includegraphics[scale=0.55]{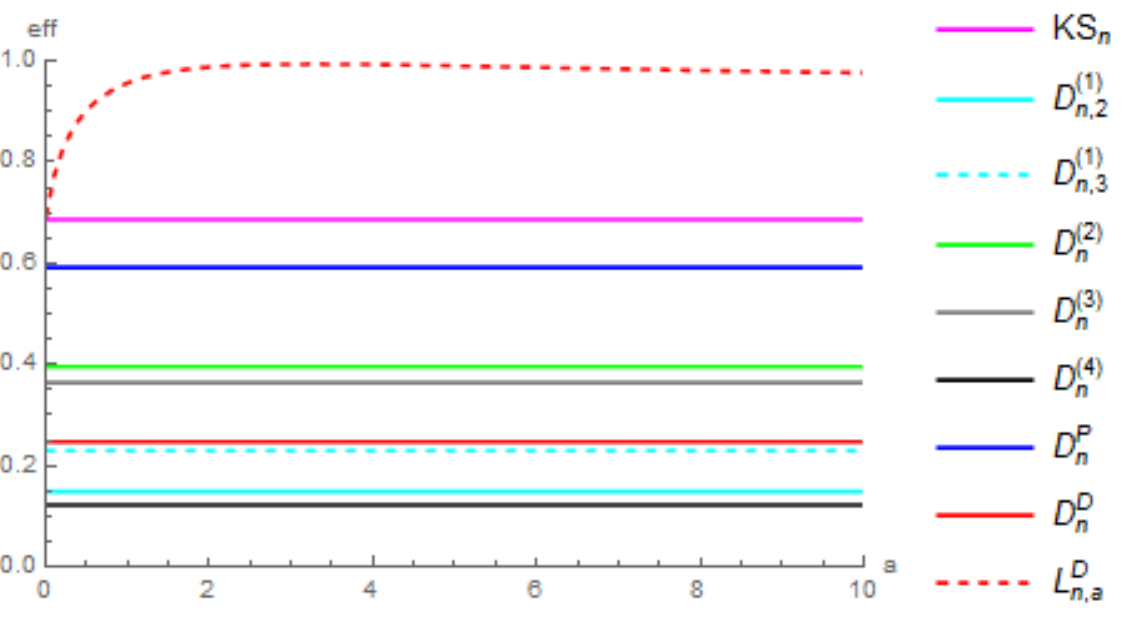}
	\includegraphics[scale=0.55]{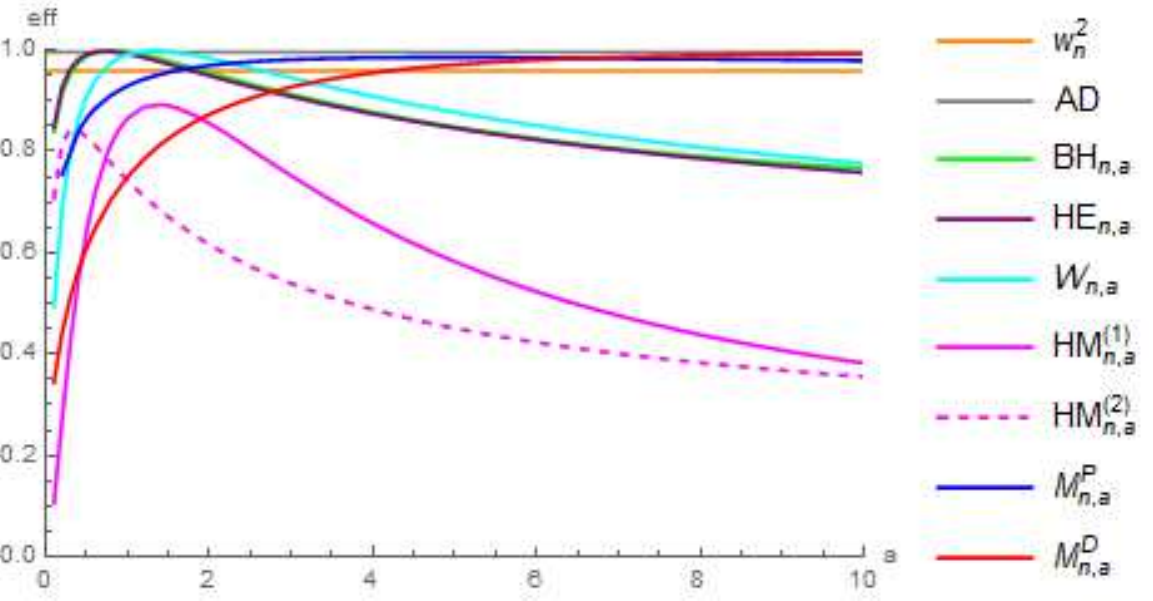}
		\caption{Local approximate Bahadur efficiencies w.r.t. LRT for a EMNW(3) alternative}
		\label{fig: appslopeM1}
	\end{center}
\end{figure}

On Figures \ref{fig: appslopeV1}-\ref{fig: appslopeM1}, there are plots of local approximate Bahadur efficiencies as a function of the tuning parameter. For tests with no such parameter straight lines are drawn. To avoid too many lines on the same plot, there are three separate plots given for each alternative, each corresponding to one of the classes of tests from Section \ref{sec: testStat}.

As a rule we can notice that, in the class of supremum-type statistics, new test $L^{\mathcal{D}}_{n,a}$ is by far the most efficient. On the other hand, supremum-type test based on characterizations that use U-empirical distribution functions, are the least efficient among all considered tests.

The impact of the tuning parameter, for the tests that have got it,  is also visible in all the figures. It is interesting to note that this impact is different for various tests in terms of monotonicity of the plotted functions.

Another general conclusion is that the ordering of the tests depends on the alternative and that there is no most efficient test to be recommended in any situation.

The CO and MO tests are known to be locally optimal for Weibull and gamma alternatives, respectively, so they are the most efficient in these cases.
However, there are quite a few other tests that perform very well in there cases.
In the case of the LFR alternative, the most efficient are EP and HM$^{(1)}_{n,a}$ and HM$^{(2)}_{n,a}$. It it interesting that for other alternatives the latter two tests are among the least efficient.

In the case of the EMNW alternative, the integral and supremum-type tests based on the characterizations via Laplace transforms, as well as most of the $L^2$ test reach, for some value of the tuning parameter, an efficiency close to one.

\section{Powers of new tests}
In this section we present the simulated powers of our new tests against different alternatives. The list of alternatives is chosen to be in concordance with the papers with extensive power comparison studies. The alternatives are:
\begin{itemize}
    \item a Weibull $W(\theta)$ distribution with density \eqref{weibull};
    \item a gamma $\Gamma(\theta)$ distribution with density \eqref{gamma};
    \item a half-normal $HN$ distribution with density 
    \begin{equation*}
        g(x)=\sqrt{\frac{2}{\pi}}e^{-\frac{x^2}{2}}, x\geq0;
    \end{equation*}
    \item a uniform $U$ distribution with density 
    \begin{equation*}
        g(x)=1, 0\leq x\leq 1;
    \end{equation*}
    \item a Chen's $CH(\theta)$ distribution with density
    \begin{equation*}
        g(x,\theta)=2\theta x^{\theta-1}e^{x^\theta-2(1-e^{x^\theta})}, x\geq 0;
    \end{equation*}
    \item a linear failure rate $LF(\theta)$ distribution  with density \eqref{lfr};
    \item a modified extreme value $EV(\theta)$ distributions with density
    
    \begin{equation*}
        g(x,\theta)=\frac{1}{\theta}e^{\frac{1-e^x}{\theta}+x}, x\geq 0;
    \end{equation*}
    \item a log-normal $LN(\theta)$ distribution with density
       \begin{equation*}
        g(x,\theta)=\frac{1}{x\sqrt{2\pi\theta^2}}e^{-\frac{(\log x)^2}{2\theta^2}}, x\geq 0;
    \end{equation*}
    \item a Dhillon $DL(\theta)$ distribution with density  
    \begin{equation*}
        g(x,\theta)=\frac{\theta+1}{x+1}(\log(x+1))^\theta e^{-(\log(x+1))^{\theta+1}}, x\geq 0.
    \end{equation*}
\end{itemize}

The powers, for aforementioned alternatives, and different choices of the tuning parameter are estimated using the Monte Carlo procedure with 10000 replicates at the level of significance 0.05.

The results are presented in Tables \ref{fig: comparison20} and \ref{fig: comparison50}. In addition, we provide the bootstrap expected power estimate for data-driven optimal value of the tuning parameter (see \cite{allison} for details). Some steps to overcome "random nature" of selected parameters are made in \cite{tenreiro2019automatic}, but some questions still remain open and are planned for future research. 

\begin{table}[htbp]
	\centering
	\caption{Percentage of rejected hypotheses for  $n=20$ }
	\resizebox{\textwidth}{!}{
		\begin{tabular}{cccccccccccccccccc}
			\rotatebox[origin=c]{90}{Alt.} & \rotatebox[origin=c]{90}{$Exp(1)$}& \rotatebox[origin=c]{90}{$W(1.4)$}  & \rotatebox[origin=c]{90}{$\Gamma(2)$} & \rotatebox[origin=c]{90}{$HN$} & \rotatebox[origin=c]{90}{$U$} & \rotatebox[origin=c]{90}{$CH(0.5)$} & \rotatebox[origin=c]{90}{$CH(1)$} & \rotatebox[origin=c]{90}{$CH(1.5)$} & \rotatebox[origin=c]{90}{$LF(2)$} & \rotatebox[origin=c]{90}{$LF(4)$} &  \rotatebox[origin=c]{90}{$EV(1.5)$}& \rotatebox[origin=c]{90}{$LN(0.8)$} & \rotatebox[origin=c]{90}{$LN(1.5)$} & \rotatebox[origin=c]{90}{$DL(1)$} & \rotatebox[origin=c]{90}{$DL(1.5)$} & \rotatebox[origin=c]{90}{$W(0.8)$}  & \rotatebox[origin=c]{90}{$\Gamma(0.4)$}\\\hline
			$M^{\mathcal{D}}_{n,0.2}$ & 5 & 24 &45 &9 & 20 & 7 & 8 & 7 & 13 & 20 & 18 & 58 & 12 & 28 & 63 & 18 & 85 \\
			$M^{\mathcal{D}}_{n,0.5}$ & 5 & 34 & 55 & 15 & 33 & 11 & 11 & 11 & 20 & 29 & 26 &  61 & 14 & 36 & 76 & 15 & 81\\
			$M^{\mathcal{D}}_{n,1}$  & 5 & 43 &	63 & 21 & 46 & 15 & 15 & 15 & 27 & 39 & 35 &  60 & 18 & 38 & 80 & 12 & 77\\
			$M^{\mathcal{D}}_{n,2}$  & 5 & 44 &	64 & 23 & 56 & 17 &	16 & 17 & 31 & 44 & 42 & 56 & 24 & 38 & 83 & 11 & 74\\
			$M^{\mathcal{D}}_{n,5}$  & 5 &  49 &	64 & 29 & 67 & 20 &	21 & 21 & 37 & 53 & 52 & 46 & 33 & 35 & 80 & 10 & 70\\ 
			$M^{\mathcal{D}}_{n,10}$ & 5 & 47 & 63 & 30 & 73 & 21 & 21 & 21 & 38 & 54 & 54 & 41 & 41 & 31 & 79 & 10 & 66\\
			$M^{\mathcal{D}}_{n,\widehat{a}}$ & 5 & 44 & 61 & 27 & 75 & 19 & 20 & 19 & 36 & 50 & 53 & 58 & 40 & 35 & 79 & 13 & 78\\
			
			$L^{\mathcal{D}}_{n,0.2}$ & 5 & 41 & 61 & 21 & 53. & 15 & 15 & 15 & 28 & 42 & 39 & 58 & 18 & 38 & 80 & 11 & 73\\
			$L^{\mathcal{D}}_{n,0.5}$ & 5 & 44 & 64 & 24 & 61 & 18 & 17 & 17 & 33 & 47 & 44 & 54 & 23 & 15 & 56 & 10 & 71\\
			$L^{\mathcal{D}}_{n,1}$ & 5 & 47 & 64 & 27 & 66 & 19 & 20 & 19 & 35 & 50 & 50 & 49 & 30 & 35 & 81 & 10 & 69\\
            $L^{\mathcal{D}}_{n,2}$ & 5 & 48 & 64 & 29 & 71 & 21 & 21 & 21 & 37 & 53 & 52 & 42 & 37 & 62 &  95 & 10 & 68  \\
            $L^{\mathcal{D}}_{n,5}$ & 5 &  50 & 64 &  32 & 78 & 23 &  23 & 23 &  41 &  57 &  58 & 40 & 48 &  86 &  99 & 12 & 65 \\
            $L^{\mathcal{D}}_{n,10}$ &5 &  50 & 61 &  31 & 77 & 23 & 21 & 22 &  39 & 53 & 22 & 35 & 49 & 29 & 77 & 11 & 62 \\
            $L^{\mathcal{D}}_{n,\widehat{a}}$ & 5 & 46 & 61 & 27 & 73 & 20 & 20 & 20 & 36 & 51 & 53 & 55 & 39 & 35 & 78 & 11 & 72\\
		  \end{tabular}
	}
	\label{fig: comparison20}
\end{table}
\begin{table}[htbp]
	\centering
	\caption{Percentage of rejected hypotheses for $n=50$}
	\resizebox{\textwidth}{!}{
		\begin{tabular}{cccccccccccccccccc}
			\rotatebox[origin=c]{90}{Alt.} & \rotatebox[origin=c]{90}{$Exp(1)$}& \rotatebox[origin=c]{90}{$W(1.4)$}  & \rotatebox[origin=c]{90}{$\Gamma(2)$} & \rotatebox[origin=c]{90}{$HN$} & \rotatebox[origin=c]{90}{$U$} & \rotatebox[origin=c]{90}{$CH(0.5)$} & \rotatebox[origin=c]{90}{$CH(1)$} & \rotatebox[origin=c]{90}{$CH(1.5)$} & \rotatebox[origin=c]{90}{$LF(2)$} & \rotatebox[origin=c]{90}{$LF(4)$} &  \rotatebox[origin=c]{90}{$EV(1.5)$}& \rotatebox[origin=c]{90}{$LN(0.8)$} & \rotatebox[origin=c]{90}{$LN(1.5)$} & \rotatebox[origin=c]{90}{$DL(1)$} & \rotatebox[origin=c]{90}{$DL(1.5)$} & \rotatebox[origin=c]{90}{$W(0.8)$}  & \rotatebox[origin=c]{90}{$\Gamma(0.4)$}\\\hline
			$M^{\mathcal{D}}_{n,0.2}$ & 5 & 57 & 89 & 17 & 41 & 12 & 10 & 10 & 26 & 41 & 33 & 98 & 30 & 74 & 98 & 36 & 100\\
		 $M^{\mathcal{D}}_{n,0.5}$ & 5 & 70 & 93 & 25& 65 & 16 & 16 & 17 & 38& 59 & 52 &  98 & 42 &  77 &  99 & 35 &  99\\
			$M^{\mathcal{D}}_{n,1}$  & 5 & 76 &  95 & 35 & 79 & 21 & 21 & 22 & 49 & 70 & 64 &  97 & 55 &  76 &  100 & 34 &  99\\
			$M^{\mathcal{D}}_{n,2}$ & 5 & 81 &  96 & 43 & 90 & 29 & 29 & 29 & 59 & 79 &	76 & 92 & 66 & 73 &  100 & 35 &  99\\
			$M^{\mathcal{D}}_{n,5}$  & 5 & 83 &  96 & 51 & 96 & 35 & 35 & 35 & 67 & 86 & 85 & 82 & 81 & 66 & 100 & 36 &  99\\
			$M^{\mathcal{D}}_{n,10}$ & 5 & 86 & 96 & 58 & 98 & 41 & 41 & 41 & 73 & 89 & 90 & 73 & 86 & 60 & 99 & 38 & 99\\ 
			$M^{\mathcal{D}}_{n,\widehat{a}}$ & 5& 81 & 95 & 56 & 96 & 41 & 41 & 40 & 71 & 87 & 90 & 97 & 87 & 74 & 99 & 36 & 99\\
			
		$L^{\mathcal{D}}_{n,0.2}$ & 5 & 79 & 96 & 41 & 90 & 26 & 27 & 26 & 58 & 78 & 74 & 96 & 62 & 77 & 100 & 34 & 99 \\
			$L^{\mathcal{D}}_{n,0.5}$ & 5 & 83 & 96 & 45 & 93 & 30 & 30 & 31 & 63 & 82 & 79 &  93 & 69 &  72 & 100 & 32 &  99\\
			$L^{\mathcal{D}}_{n,1}$ & 5 & 85 &  96 & 51 & 96 & 34 & 34 & 34 & 66 & 86 & 85 & 87 & 78 & 69 &  100 & 36 &  99\\
            $L^{\mathcal{D}}_{n,2}$ & 5 &   86 &  96 & 56 &  98 & 40 & 39 & 38 & 72 & 88 & 89 & 79 & 85 & 63 &  99 & 37 &  99 \\
            $L^{\mathcal{D}}_{n,5}$ & 5 &  86 & 95 & 59 &  99 & 42 & 43 & 44 & 74 & 90 & 92 & 65 & 89 & 56 &  99 & 39 & 98\\
            $L^{\mathcal{D}}_{n,10}$ & 5 & 84 & 95 &  62 &  99 & 45 & 43 & 45 &  76 &  91 & 92 & 62 & 80 & 53 &  99 & 39 & 98\\
            $L^{\mathcal{D}}_{n,\widehat{a}}$ & 5& 83 & 95 & 57 & 96 & 40 & 40 & 40 & 72 & 88 & 89 & 96 & 84 & 72 & 100 & 35 & 99\\
         \end{tabular}
	}
	\label{fig: comparison50}
\end{table}

We can see from tables  that all the sizes of our tests are equal to the level of significance, and that the powers range from reasonable to high. In comparison to the other exponentiality tests (see \cite{cuparic2018new} and \cite{torabi2018wide}) we can conclude that our tests are serious competitors to the most powerful classical and recent exponentiality tests.

\section{Conclusion}
In this paper we proposed two new consistent scale-free  tests for the exponential distribution. In addition, we performed an extensive comparison of efficiency of recent and classical exponentiality tests. 

We showed that our tests are very efficient and powerful and can be considered as serious competitors to  other high quality exponentilaity tests. 

From the comparison study, the general conclusion is that there is no uniformly best test, since the performance is different for different alternatives. However, the tests based on integral transforms, due to their flexibility because of the tuning parameter, generally tend to have higher efficiency, and they are recommended to use.

\section*{Appendix A -- Proofs of theorems}
\begin{proof}[Proof of Theorem \ref{raspodelaM}]
Our statistic $M_{n,a}(\widehat{\lambda}_n)$ can be  rewritten as
\begin{equation*}
\begin{aligned}
M_{n,a}(\widehat{\lambda}_n)&=\int_0^{\infty}\left(\frac{1}{n^{2}}\sum_{i_1,i_2=1}^n\xi(X_{i_1},X_{i_2},t;a\widehat{\lambda}_n)\right)^2e^{-at}dt\\
&=\int_0^{\infty}V_n(t,\widehat{\lambda}_n)^2e^{-at}dt.
\end{aligned}
\end{equation*}
Here $V_n(t;\widehat{\lambda}_n)$, for each $t>0$,  is a $V$-statistic of order 2 with an estimated parameter, and kernel  $\xi(X_{i_1},X_{i_2},t;a,\widehat{\lambda}_n)$.

Since the function $\xi(x_{1},x_{2},t;a\gamma)$ is continuously differentiable with respect to $\gamma$ at the point  $\gamma=\lambda$ we may apply the mean-value theorem. We have  
\begin{equation*}
V_n(t;\widehat{\lambda}_n)=V_n(t;\lambda)+(\widehat{\lambda}_n-\lambda)\frac{\partial V_n(t;\gamma)}{\partial\gamma}|_{\gamma=\lambda^*},
\end{equation*}
for some $\lambda^*$ between $\lambda$ and  $\widehat{\lambda}_n$.
From the Law of large numbers for V-statistics \cite[6.4.2.]{Serfling}, the partial derivative
$\frac{\partial V_n(t;\gamma)}{\partial\gamma}$ converges to
\begin{equation*}
E\left(2t\min\{X_1,X_2\}e^{-2t\min\{X_{1},X_{2}\}\gamma}-tX_{1}e^{-tX_{1}\gamma}\right)=0.
\end{equation*}
Since $\sqrt{n}(\widehat{\lambda}_n-\lambda)$ is stochastically bounded,
it follows that statistics $\sqrt{n}V_n(t;\widehat{\lambda}_n)$ and $\sqrt{n}V_n(t;1)$ are  asymptotically equally distributed.  Therefore, $nM_{n,a}(\widehat{\lambda}_n)$ and $nM_{n,a}(\lambda)$ will have the same limiting distribution. Hence we need to  derive limiting distribution of $nM_{n,a}(\lambda)$.

First notice that  $M_{n,a}(\lambda)$ is a $V$-statistic with symmetric kernel $H$. Also, since the distribution of $M_{n,a}(\lambda)$ does not depend on $\lambda$ we may assume that $\lambda=1.$

It is easy to show that its first projection of kernel $H$ on $X_1$ is equal to zero. After some calculations, we obtain that its second projection on $(X_1,X_2)$ is given by
 \begin{align*}
    \widetilde{h}_2(u,v;a)&=
    E(H(X_1,X_2,X_3,X_4;a,1)|X_1=u,X_2=v)\\&=
   \frac{1}{6}\bigg(3+\frac{1}{a+u+v}-\frac{2e^{-u}}{a+2u+v}-\frac{2e^{-v}}{a+u+2v}-(4-a)e^aEi(-a)\\&
   +e^\frac{a+v}{2}\Big(Ei\big(-\frac{a+v}{2}\big)-Ei\big(-\frac{a+2u+v}{2}\big)\Big)+e^{a+u}\Big(4Ei(-a-2u)-Ei(-a-u)\Big)\\&
   +e^\frac{a+u}{2}\Big(Ei\big(-\frac{a+u}{2}\big)-Ei\big(-\frac{a+u+2v}{2}\big)\Big)
  +e^{a+v}\Big(4Ei(-a-2v)-Ei(-a-v)\Big)\\& 
  +\frac{e^{-u-v}}{a+2(u+v)}(2a+4(1+u+v))-2(e^{-u}+e^{-v})+e^{\frac{a}{2}}\Big(-(4+a+2u)Ei(-\frac{a}{2}-u)\\&+(a+4)Ei(-\frac{a}{2})+(a+2(2+u+v))Ei(-\frac{a}{2}-u-v)
  -(4+a+2v)Ei(-\frac{a}{2}-v)\Big)\bigg),
\end{align*}
where $\text{Ei}(x)=-\int_{-x}^\infty \frac{e^{-t}}{t}dt$ is the exponential integral. The function $\widetilde{h}_2$ is  non-constant for any $a>0$. Hence,  kernel $h$ is degenerate with degree 2. 

Since the  kernel $H$ is bounded and  degenerate, from the theorem on asymptotic distribution of U-statistics with
degenerate kernels \cite[Corollary 4.4.2]{korolyuk}, and the Hoeffding representation of $V$-statistics, we get that, $M_{n,a}(1)$, being a $V$-statistic of degree 2, has the following asymptotic distribution 
\begin{equation}\label{raspodelaT}
 	nM_{n,a}(1)\overset{d}{\rightarrow}6\sum_{k=1}^\infty\delta_kW^2_k,
 	\end{equation}
where $\{\delta_k\}$  are the eigenvalues of the integral operator $\mathcal{M}_a$ defined by  
 	\begin{equation} \label{operatorA}
 	\mathcal{M}_{a}q(x)=\int_{0}^{+\infty}\widetilde{h}_2(x,y;a)q(y)dF(y),
 	\end{equation}
 	  and  $\{W_{k}\}$ is the sequence  of i.i.d.  standard Gaussian random variables.
 \end{proof}

\begin{proof}[Proof of Theorem \ref{raspodelaL}]
The test statistic $L^{\mathcal{D}}_{n,a}$ can be represented as $\sup\limits_{t\geq 0}|V_n(t;\widehat{\lambda})e^{-at}|$, where $\{V_n(t;\widehat{\lambda})\}$ is a $V-$empirical process introduced in the proof of Theorem \ref{raspodelaM}.  
We have shown that statistics $\sqrt{n}V_n(t;\widehat{\lambda}_n)$ and $\sqrt{n}V_n(t;\lambda)$ are asymptotically equally distributed, and that their distribution does not depend on $\lambda$.
Hence, $\sqrt{n}V_n(t\widehat{\lambda}_n)e^{-at}$ converges in $D(0,\infty)$ to a centered Gaussian process $\{\eta(t)\}$ (see  \cite{silverman1983convergence}), with  covariance  function
\begin{equation*}
\begin{aligned}
        K(s,t)&=e^{-a(s+t)}\int\limits_0^\infty\int\limits_0^\infty(e^{-tx}-e^{-t2\min\{x,y\}})(e^{-sx}-e^{-s2\min\{x,y\}})e^{-x-y}dxdy\\&=\frac{e^{-a(s+t)}st(4+8s+4s^2+8t+15st+6s^2t+4t^2+6st^2)}{4(1+s)(1+t)(1+s+t)(2+2s+t)(2+s+2t)(3+2s+2t)}.
\end{aligned}
\end{equation*}
Therefore $L^{\mathcal{D}}_{n,a}$ converges to $\sup_{t>0}|\eta(t)|$. This completes the proof.

\end{proof}

\begin{proof}[Proof of Lemma \ref{Slopovi}]
Using the result of \cite{Zolotarev}, the logarithmic tail behavior
of limiting distribution  function of $\widetilde{M}_{n,a}(\widehat{\lambda}_n)=\sqrt{nM_{n,a}(\widehat{\lambda}_n)}$ is

\begin{equation*}
    \log(1-F_{\widetilde{M}_a}(t))=-\frac{t^2}{12\delta_1}+o(t^2),\;\;t\to \infty.
\end{equation*}
Therefore,  $a_{\widetilde{M}_a}=\frac{1}{6\delta_1}.$
The limit in probability $P_{\theta}$ of $\widetilde{M}_{n,a}(\widehat{\lambda}_n)/\sqrt{n}$ is

\begin{equation*}
b_{\widetilde{M}_{a}}=\sqrt{b_{M}(\theta)}.    
\end{equation*}

The expression for $b_M(\theta)$ is derived in the following lemma.

\begin{lemma}\label{lema_B}
	For a given alternative density $g(x;\theta)$ whose distribution belongs to $\mathcal{G}$, we have that the limit in probability of the statistic $M_{n,a}(\widehat{\lambda}_n)$ is 	
	
	\begin{equation*}
	b_M(\theta)=6\int\limits_{0}^{\infty}\int\limits_{0}^{\infty}\widetilde{h}_2(x,y;a)g'_{\theta}(x;0)g'_{\theta}(y;0)dxdy\cdot\theta^2+o(\theta^2), \theta\rightarrow0.
	\end{equation*}

\end{lemma}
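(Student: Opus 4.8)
The plan is to recognize $b_M(\theta)$ as a deterministic functional of the alternative and then to Taylor-expand it at $\theta=0$, reducing the problem to the standard expansion of the expectation of a bounded V-statistic kernel that is degenerate of order two.

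First I would establish that, under $P_\theta$, the law of large numbers for V-statistics \cite[6.4.2]{Serfling} gives
$$M_{n,a}(\widehat\lambda_n)\xrightarrow{P_\theta}b_M(\theta):=\int_0^\infty\Big(E_\theta e^{-tX_1/\mu(\theta)}-E_\theta e^{-2t\min(X_1,X_2)/\mu(\theta)}\Big)^2e^{-at}\,dt,\qquad \mu(\theta)=E_\theta X_1,$$
that is, $b_M(\theta)=E_\theta\big[H(X_1,X_2,X_3,X_4;a,\mu(\theta)^{-1})\big]$ with $H$ the kernel of Theorem~\ref{raspodelaM}: indeed $\widehat\lambda_n=\overline{X}_n^{-1}\to\mu(\theta)^{-1}$ and the empirical and V-empirical Laplace transforms converge to their population versions, so, the integrand being bounded and depending continuously on $\widehat\lambda_n$ and on $t$, a continuous-mapping and dominated-convergence argument yields the displayed limit (the estimated parameter is harmless for the \emph{limit in probability}, unlike for the limiting \emph{distribution} in Theorem~\ref{raspodelaM}). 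The functional $b_M(\theta)$ depends on the alternative only through the law of the scaled variable $X_1/\mu(\theta)$, hence is invariant under $g(x;\theta)\mapsto c\,g(cx;\theta)$; rescaling $\mathcal G$ so that $\mu(\theta)\equiv1$, we may freeze the kernel, $H(\cdots;a,\mu(\theta)^{-1})=H(\cdots;a,1)=:H$, and write $b_M(\theta)=E_\theta[H(X_1,X_2,X_3,X_4;a,1)]$ with only the integrating density carrying the parameter.

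Next I would expand $\prod_{i=1}^4 g(x_i;\theta)=\prod_{i=1}^4 g(x_i;0)\cdot\big[1+\theta\sum_i r(x_i)+\theta^2\big(\sum_{i<j}r(x_i)r(x_j)+\tfrac12\sum_i s(x_i)\big)+o(\theta^2)\big]$, with $r=g'_\theta(\cdot;0)/g(\cdot;0)$ and $s=g''_{\theta^2}(\cdot;0)/g(\cdot;0)$, and integrate against $H$. Because the first projection of $H$ is identically zero (established in the proof of Theorem~\ref{raspodelaM}), the constant, the $O(\theta)$, and the $\tfrac12\sum_i s(x_i)$ contributions all vanish ($E_0H=0$, $E_0[H\,r(X_i)]=E_0[H\,s(X_i)]=0$), leaving
$$b_M(\theta)=\theta^2\sum_{1\le i<j\le4}E_0\big[H\,r(X_i)r(X_j)\big]+o(\theta^2)=6\,\theta^2\int_0^\infty\!\!\int_0^\infty\widetilde h_2(x,y;a)\,g'_\theta(x;0)g'_\theta(y;0)\,dx\,dy+o(\theta^2),$$
using $E_0[H\mid X_1=x,X_2=y]=\widetilde h_2(x,y;a)$, $r(x)g(x;0)=g'_\theta(x;0)$, and that all $\binom{4}{2}=6$ pairs contribute equally by symmetry of $H$ — so the constant $6$ in the lemma is exactly $\binom{4}{2}$. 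The interchanges of $\lim_n$, $\int_0^\infty(\cdot)e^{-at}\,dt$ and the $\theta$-expansion are licensed by the boundedness of $H$ and the regularity Assumptions~WD of \cite{nikitinMetron}; the earlier rescaling to $\mu(\theta)\equiv1$ merely replaces $g'_\theta(\cdot;0)$ by $g'_\theta(\cdot;0)+\mu'_\theta(0)(1-x)e^{-x}$, and the extra scale-score direction drops out because $\int_0^\infty\widetilde h_2(x,y;a)(1-x)e^{-x}\,dx=0$ for every $y$ — a reflection of the scale-invariance of $M^{\mathcal D}_{n,a}$ which can also be checked directly from the explicit formula for $\widetilde h_2$.

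The main obstacle is making the second paragraph rigorous: passing from the statistic with estimated parameter $\widehat\lambda_n$ to a clean deterministic limit, and, in the local expansion, disentangling the $\theta$-dependence entering through the scaling constant $\mu(\theta)$ from that entering through the density. This is precisely where one invokes the scale-invariance (ancillarity) of $M^{\mathcal D}_{n,a}$ together with the Assumptions~WD; once the kernel has been frozen, the remaining work is the routine combinatorics of expanding the expectation of a bounded, order-two degenerate V-statistic kernel under a close alternative.
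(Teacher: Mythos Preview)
Your proof is correct and follows essentially the same route as the paper: identify $b_M(\theta)=E_\theta H(\boldsymbol X;a,\mu(\theta))$ via the law of large numbers for V-statistics with estimated parameters, then Maclaurin-expand and use the degeneracy of $H$ (vanishing first projection) to kill the zeroth- and first-order terms, leaving the $\binom{4}{2}=6$ second-order cross terms involving $\widetilde h_2$. The paper's version is terser (``after some calculations we get that $b'_M(0)=0$''), whereas you make explicit why the $\mu(\theta)$-dependence of the kernel is harmless --- either by rescaling the family to $\mu(\theta)\equiv1$ or, equivalently, by the orthogonality $\int_0^\infty\widetilde h_2(x,y;a)(1-x)e^{-x}\,dx=0$ coming from scale-ancillarity; this is a nice clarification of a point the paper leaves implicit.
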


\begin{proof}
For brevity,  denote  $\boldsymbol{x}=(x_1,x_2,x_3,x_{4})$ and $\boldsymbol{G}(\boldsymbol{x};\theta)=\prod_{i=1}^{4}G(x_i;\theta)$.
Since $\overline{X}_n$ converges almost surely to its
expected value $\mu(\theta)$, using the Law of large numbers for $V$-statistics with estimated parameters (see \cite{iverson}),  $M^{\mathcal{D}}_{n,a}(\widehat{\lambda}_n)$ converges to
\begin{equation*}
\begin{aligned}
	b_M(\theta)&=E_{\theta}(H(\boldsymbol{X},a;\mu(\theta)))\\&=\int_{{(R^+)}^{4}}\Big(\frac{\mu(\theta)}{x_1+x_{3}+a\mu(\theta)}-\frac{\mu(\theta)}{x_{3}+2\min\{x_{1},x_{2}\}+a\mu(\theta)}\\&-\frac{\mu(\theta)}{x_{1}+2\min\{x_{3},x_{4}\}+a\mu(\theta)}+\frac{\mu(\theta)}{2\min\{x_{1},x_{2}\}+2\min\{x_{3},x_{4}\}+a\mu(\theta)}\Big)d\boldsymbol{G}(\boldsymbol{x};\theta).
\end{aligned}
\end{equation*}
We may assume that $\mu(0)=1$ since the test statistic is ancillary for $\lambda$ under the null hypothesis. After some calculations we get that $b'_M(0)=0$ and that
\begin{equation*}
\begin{split}
b''(0)&
=\int_{(R^+)^{4}}H(\boldsymbol{x},a;1)\frac{\partial^2}{\partial\theta^2}d\boldsymbol{G}(\boldsymbol{x},0)
=6\int_{(R^+)^2}\widetilde{h}_2(x,y)g'_{\theta}(x;0)g'_{\theta}(y;0)dxdy.
\end{split}
\end{equation*}
Expanding $b_M(\theta)$ into the Maclaurin series we complete the proof. 
\end{proof}


Now we pass to the statistic $L^{\mathcal{D}}_n.$
The tail behaviour of the random variable $\sup_{t>0}|n_{t}|$ is equal to the inverse of supremum of its covariance function, i.e. the $a_L=\frac{1}{\sup_{t>0}K(t,t)}$ (see \cite{marcus1972sample}).


Similarly like before, since $\overline{X}_n$ converges almost surely to its expected value $\mu(\theta)$, using the Law of large numbers for $V$-statistics with estimated parameters (see \cite{iverson}),  $V_n(t,a;\hat{\lambda})e^{-at}$ converges to
\begin{equation*}
\begin{aligned}
	b_L(\theta)&=E_{\theta}(\Phi(X_1,X_2;t,a,\mu(\theta))).
\end{aligned}
\end{equation*}
Expanding $b_L(\theta)$ in the Maclaurin series we obtain
\begin{equation*}
    b_L(\theta)=2\int_{0}^{\infty}\widetilde{\varphi}_1(x,t;a)g'_{\theta}(x;0)dx\cdot\theta+o(\theta),
\end{equation*}
where $\widetilde{\varphi}_1(x,t;a)=E(\Phi(X_1,X_2,t;a,1)|X_1=x_1).$
According to the Glivenko-Cantelli theorem for V-statistics (\cite{helmers1988glivenko}) the limit in probability under the alternative for statistics $L_{n,a}^{\mathcal{D}}$ is equal to $\sup_{t\geq0}|b_L(\theta)|$. Inserting this into the expression for the Bahadur slope completes the proof. 
\end{proof}

\section*{Appendix B -- Bahadur approximate slopes}

\begin{proof}{Approximate local Bahadur slope of statistics EP and CO }

Those statistics can be represented as
$$T_n=\frac{1}{n}\sum_{i=1}^n\Phi(X;\hat{\mu}),$$ where $\Phi(x;\gamma)$ is continuously   differentiable with respect to $\gamma$ at point $\gamma=\mu.$ 
It was shown that the limiting distribution of $\sqrt{n}T_n$ is zero mean normal  with variance $\sigma^2_{\Phi}$ (see \cite{epps1986test} and \cite{cox1984analysis}). Hence, the coefficient $a_T$ is equal to $\frac{1}{\sigma^2_{\Phi}}.$

Further, we have
\begin{equation*}
\begin{aligned}
	b(\theta)&=E_{\theta}(\Phi(X;\mu(\theta)))=\int\limits_0^\infty \Phi(x;\mu(\theta))dG(x;\theta)
\end{aligned}
\end{equation*}
\begin{equation*}
\begin{aligned}
	b'(\theta)&=\int\limits_0^\infty \frac{\partial}{\partial\mu}\Phi(x;\mu(\theta))\frac{\partial}{\partial\theta}\mu(\theta)dG(x;\theta)+\int\limits_0^\infty \Phi(x;\mu(\theta))\frac{\partial}{\partial\theta}dG(x;\theta).\\
\end{aligned}
\end{equation*}
Then it holds that
\begin{equation*}
\begin{aligned}
	b(\theta)&=b(0)+b'(0)\theta+o(\theta)\\&=\Bigg(\mu'(0)\int\limits_0^\infty \Phi'(x;1)g(x;0)dx+\int\limits_0^\infty\Phi(x;1)g'(x;0)dx)\bigg)\theta+o(\theta).
\end{aligned}
\end{equation*}
From this we obtain the expression for $c_T(\theta).$
\end{proof}



\begin{proof}{Approximate local Bahadur slope of statistics BH, HE, Wn, HM, $\omega^2$  and AD}

Let $T$ be the one of considered statistics.
It was shown that  the limiting distribution of $nT_n$ is $\sum_{i=1}^{\infty}\delta_iW_i^2,$ where $\{W_i\}$ is the sequence of i.i.d. standard normal variables and $\{\delta_i\}$ the sequence of eigenvalues of certain covariance operator.  
Using the result of Zolotarev in \cite{Zolotarev}, we have that the logarithmic tail behavior of limiting distribution function of $\widetilde{T}_n=\sqrt{n T_n}$ is 
\begin{equation*}
    \log(1-F_{\widetilde{T}}(s))=-\frac{s^2}{2\delta_1}+o(s^2), s\rightarrow\infty.
\end{equation*}
Next, the limit in probability of $\widetilde{T}_n/\sqrt{n}$ is $b_{\widetilde{T}}(\theta)=\sqrt{b_T(\theta)}$.
Statistic $T_n$ can be represented as
\begin{equation*}
T_{n}=\frac{1}{n^2}\sum_{k,j=1}^n\Phi(X_k,X_j;\hat{\mu}).
\end{equation*}

As before,  we may assume that $\mu(0)=1$.
Since the sample mean converges almost surely to its expected value, by using the Law of large numbers for
$V$-statistics with estimated parameters (see \cite{iverson}), we can conclude that the limit in the probability of statistic $T_n$ is equal to the one of 
\begin{equation*}
b_T(\theta)=E_{\theta}(\Phi(X_1,X_2;\mu(\theta)))=\int\limits_{0}^\infty\int\limits_{0}^\infty \Phi(x,y;\mu(\theta))g(x;\theta)g(y;\theta)dxdy.
\end{equation*}
We get that  $b_T'(0)=0$ and that 
\begin{equation*}
\begin{split}
b_T''(0)=2\int\limits_{0}^\infty\int\limits_{0}^\infty& \Phi(x,y;1)g'_{\theta}(x;0)g'_{\theta}(y;0)dxdy+4\mu'(0)\int\limits_{0}^\infty\int\limits_{0}^\infty \Phi'(x,y;1)g(x;0)g'_{\theta}(y;0)dxdy\\&+(\mu'(0))^2\int\limits_{0}^\infty\int\limits_{0}^\infty\Phi''(x,y;1)g(x;0)g(y;0)dxdy,
\end{split}
\end{equation*}
Expanding $b_T(\theta)$ into Maclaurin series 
we obtain expression for $b_T$.
\end{proof}

\section*{Appendix C -- Tables of efficiencies}

\begin{table}[!htbp]
	\centering
	\caption{Relative Bahadur efficiency with respect to LRT}
\label{fig: efikasnosti1}
\begin{tabular}{ccccc}
   & $Weibull$ & $Gamma$ & $LFR$ & $EMNW(3)$ \\\hline
    $EP_n$ & 0.876 & 0.694 & 0.750 & 0.937\\
    $CO_n$ & 1 & 0.943 & 0.326 & 0.917 \\
    $G_n$ & 0.876 & 0.694 & 0.750 & 0.937\\
    $MO_n$ & 0.943 & 1 & 0.388 & 0.814 \\
    $I_{n,2}^{(1)}$ & 0.621 & 0.723 & 0.104 & 0.694\\
    $I_{n,3}^{(1)}$ & 0.664 & 0.708 & 0.159 & 0.799\\
    $I_{n}^{(2)}$ & 0.750 & 0.796 & 0.208 & 0.844\\
    $I_{n}^{(3)}$ & 0.746 & 0.701 & 0.308 & 0.916\\
    $I_{n}^{(4)}$ & 0.649 & 0.638 & 0.206 & 0.835\\
    $I_{n}^{\mathcal{P}}$ & 0.821 & 0.788 & 0.337 & 0.949\\
    $I_{n}^{\mathcal{D}}$ & 0.697 & 0.790 & 0.149 & 0.746\\
    
    $J_{n,0.2}^{\mathcal{P}}$ &0.750 & 0.856 & 0.171 &0.751\\
    $J_{n,0.5}^{\mathcal{P}}$ &0.812 & 0.843 & 0.262 & 0.888\\
    $J_{n,1}^{\mathcal{P}}$ &0.846 & 0.820 & 0.349 & 0.955\\
    $J_{n,2}^{\mathcal{P}}$ & 0.868 & 0.792 & 0.445 & 0.985\\
    $J_{n,5}^{\mathcal{P}}$ &0.882 & 0.756 & 0.566 & 0.987 \\
    $J_{n,10}^{\mathcal{P}}$ & 0.884 & 0.733 & 0.637 & 0.974 \\
    
    $J_{n,0.2}^{\mathcal{D}}$ &0.526 & 0.731 & 0.053 & 0.370\\
    $J_{n,0.5}^{\mathcal{D}}$ & 0.674 & 0.826 & 0.117 & 0.608\\
    $J_{n,1}^{\mathcal{D}}$ & 0.771 & 0.857 & 0.198 & 0.786\\
    $J_{n,2}^{\mathcal{D}}$ & 0.842 & 0.854 & 0.305 & 0.917\\
    $J_{n,5}^{\mathcal{D}}$ &0.889 & 0.813 & 0.465 & 0.991\\
    $J_{n,10}^{\mathcal{D}}$ & 0.896 & 0.775 & 0.569 & 0.994\\
    
    $KS$ & 0.538 & 0.503 & 0.356 & 0.686\\
    
    $D_{n,2}^{(1)}$ & 0.092 & 0.093 & 0.052 & 0.149 \\
    $D_{n,3}^{(1)}$ & 0.152 & 0.138 & 0.106& 0.230\\
    $D_{n}^{(2)}$ & 0.277 & 0.267 & 0.155 & 0.396\\
    $D_{n}^{(3)}$ & 0.258 & 0.212 & 0.213 & 0.364\\
    $D_{n}^{(4)}$ & 0.079 & 0.066 & 0.067 & 0.122\\
    $D_{n}^{\mathcal{P}}$ & 0.437 & 0.448 & 0.192 & 0.592\\
    $D_{n}^{\mathcal{D}}$ & 0.158 & 0.174 & 0.073 & 0.247\\
    
    $\omega^2_n$ & 0.808 & 0.701 & 0.588 & 0.958\\
    $AD_n$ & 0.909 & 0.863 & 0.573 & 0.996 \\
    
    $BH_{n,0.2}$ & 0.905 & 0.928 & 0.421 & 0.914\\
    $BH_{n,0.5}$ & 0.932& 0.877 & 0.534 & 0.987\\
    $BH_{n,1}$ & 0.926& 0.810 & 0.638 & 0.996\\
    $BH_{n,2}$ & 0.894& 0.726 & 0.749 & 0.956\\
    $BH_{n,5}$ & 0.823& 0.611 & 0.878 & 0.848\\
    $BH_{n,10}$ & 0.771& 0.542 & 0.956 & 0.767\\
  $HE_{n,0.2}$ & 0.923 & 0.928 & 0.420 & 0.927\\
    $HE_{n,0.5}$ & 0.940 & 0.868 & 0.542 & 0.991\\
    $HE_{n,1}$ & 0.928 & 0.799 & 0.647 & 0.992\\
     $HE_{n,2}$ & 0.893 & 0.719 & 0.752 & 0.949\\
\end{tabular}
\end{table}

	\begin{table}[!htbp]
	\centering
	\caption{Relative Bahadur efficiency with respect to LRT}
\label{fig: efikasnosti2}
\begin{tabular}{ccccc}
   & $Weibull$ & $Gamma$ & $LFR$ & $EMNW(3)$ \\\hline

    $HE_{n,5}$ & 0.822 & 0.609 & 0.873 & 0.846\\
    $HE_{n,10}$ & 0.761 & 0.536 & 0.935 &0.758\\
    
    $W_{n,0.2}$ & 0.790 & 0.914 & 0.224 & 0.688\\
    $W_{n,0.5}$ & 0.905 & 0.922 & 0.382 & 0.909\\
    $W_{n,1}$ & 0.935 & 0.864 & 0.528 & 0.991\\
    $W_{n,2}$ & 0.917 & 0.772 & 0.677 & 0.983\\
    $W_{n,5}$ & 0.842 & 0.638 & 0.842 & 0.877\\
    $W_{n,10}$ & 0.774 & 0.550 & 0.924 & 0.776\\
    
    $HM_{n,0.2}^{(1)}$ & 0.324 & 0.448 & 0.049 & 0.271\\
    $HM_{n,0.5}^{(1)}$ & 0.560 & 0.621 & 0.174 & 0.643\\
    $HM_{n,1}^{(1)}$ & 0.691 & 0.642 & 0.361 & 0.865\\
    $HM_{n,2}^{(1)}$ & 0.715 & 0.557 & 0.612 & 0.855\\
    $HM_{n,5}^{(1)}$ & 0.591 & 0.373 & 0.895 & 0.582\\
    $HM_{n,10}^{(1)}$ & 0.452 & 0.254 & 0.951 & 0.382\\
    
    $HM_{n,0.2}^{(2)}$ &0.633 & 0.579 & 0.320 & 0.818\\
    $HM_{n,0.5}^{(2)}$ & 0.673& 0.533 & 0.520 & 0.828\\
    $HM_{n,1}^{(2)}$ & 0.656& 0.468 & 0.683 & 0.742\\
    $HM_{n,2}^{(2)}$ & 0.603& 0.391 & 0.825 & 0.616\\
    $HM_{n,5}^{(2)}$ & 0.504& 0.295& 0.931 & 0.451\\
    $HM_{n,10}^{(2)}$ & 0.430& 0.238 & 0.942 & 0.355\\
    
   $M_{n,0.2}^{(\mathcal{P})}$ & 0.734 & 0.832 & 0.183 & 0.754\\
   $M_{n,0.5}^{(\mathcal{P})}$ & 0.787 & 0.827 & 0.253 & 0.865\\
   $M_{n,1}^{(\mathcal{P})}$ & 0.822 & 0.814 & 0.324 & 0.929\\
   $M_{n,2}^{(\mathcal{P})}$ & 0.850 & 0.794 & 0.407 & 0.969\\
   $M_{n,5}^{(\mathcal{P})}$ & 0.873 & 0.764 & 0.523 &  0.985\\
   $M_{n,10}^{(\mathcal{P})}$ & 0.881 & 0.742 & 0.601 & 0.979\\
   $M_{n,0.2}^{(\mathcal{D})}$ & 0.533 & 0.712 & 0.080 & 0.443\\
   $M_{n,0.5}^{(\mathcal{D})}$ & 0.645 & 0.788 & 0.130 & 0.610\\
   $M_{n,1}^{(\mathcal{D})}$ & 0.729 & 0.825 & 0.191 & 0.750\\
   $M_{n,2}^{(\mathcal{D})}$ & 0.803 & 0.838 & 0.275 & 0.873\\
   $M_{n,5}^{(\mathcal{D})}$ & 0.867 & 0.820 & 0.413 & 0.971\\
   $M_{n,10}^{(\mathcal{D})}$ & 0.889 & 0.789 & 0.520 & 0.992\\
   $L_{n,0.2}^{(\mathcal{D})}$ & 0.738 & 0.798 & 0.259 & 0.821\\
   $L_{n,0.5}^{(\mathcal{D})}$ & 0.799 & 0.815 & 0.323 & 0.902\\
   $L_{n,1}^{(\mathcal{D})}$ & 0.844 & 0.815 & 0.394 & 0.957\\
   $L_{n,2}^{(\mathcal{D})}$ & 0.875 & 0.800 & 0.479 & 0.988\\
   $L_{n,5}^{(\mathcal{D})}$ & 0.892 & 0.766 & 0.588 & 0.990 \\
   $L_{n,10}^{(\mathcal{D})}$ & 0.891 & 0.740 & 0.652 &0.976 \\
\end{tabular}
\end{table}



\section*{Acknowledgement}
This work was supported by the MNTRS, Serbia under  Grant No. 174012 (first  and second  author).

\bibliographystyle{abbrv}
\bibliography{literatura}

\end{document}